\newcolumntype{L}[1]{>{\raggedright\let\newline\\\arraybackslash\hspace{0pt}}m{#1}}
\newcolumntype{C}[1]{>{\centering\let\newline\\\arraybackslash\hspace{0pt}}m{#1}}
\newcolumntype{R}[1]{>{\raggedleft\let\newline\\\arraybackslash\hspace{0pt}}m{#1}}
\newcommand{\here}[2]{\tikz[remember picture]{\node[inner sep=0](#2){#1}}}
\newtheorem{thm}{Theorem}[section]
\newtheorem{lemma}[thm]{Lemma}
\newtheorem{prop}[thm]{Proposition}
\newtheorem{definition}{Definition}[section]
\newtheorem{example}[thm]{Example}
\newtheorem{remark}[thm]{Remark}
\newtheorem{question}[thm]{Question}
\newtheorem{conjecture}[thm]{Conjecture}
\newtheorem{question
}{Question}
\newtheorem*{acknowledgement}{Acknowledgement}
\def\0{{\bf 0}}
\def\N{{\bf N}}
\def\R{{\bf R}}
\def\Z{{\bf Z}}
\def\SD{\mathop{\rm SD}\nolimits}
\def\keywords{\xdef\@thefnmark{}\@footnotetext}
\title{Simultaneous Visibility in the Integer Lattice}
\date{}
\author{Daniel Berend\footnote{
Departments of Mathematics and Computer Science, Ben-Gurion
University, Beer Sheva 84105, Israel.
E-mail: berend@math.bgu.ac.il}
\footnote{Research supported in part by
the Milken Families Foundation Chair in
Mathematics.}
\and
Rishi Kumar\footnote{Department of Mathematics, Ben-Gurion
University, Beer Sheva 84105, Israel.
E-mail: kumarr@post.bgu.ac.il}
\and
Andrew Pollington\footnote{
National Science Foundation, Arlington,  VA 22230, USA.
E-mail: adpollin@nsf.gov}
}
\begin{document}
\maketitle
\keywords{2020 \bf{Mathematics Subject Classification:} Primary 11P21, 11N36, Secondary 37A44.}
\keywords{\bf{Key words and phrases. Simultaneous Visibility, Selberg’s Sieve, Schnirelmann Densities, unique ergodicity.}}
\begin{abstract}
Two lattice points are visible from one another if there is no lattice point on the open line segment joining them. Let $S$ be a finite subset of $\Z^k$. The asymptotic density of the set of lattice points, visible from all points of $S$, was studied by several authors. 
Our main result is an improved upper bound on the error term. We also find the Schnirelmann density of the set of visible points from some sets $S$. Finally, we discuss these questions from the point of view of ergodic theory.
\end{abstract}

\section{Introduction and Statement of Results}\label{sec, intro}
Let $\Z^k$ be the $k$-dimensional integer lattice, $k\geq 2$. 
Two distinct points $\textbf{x}$ = $(x_1,\ldots ,x_k)$ and $\textbf{y}$ = $(y_1, \ldots ,y_k)$ in $\Z^k$ are \textit{mutually visible} if no other point of $\Z^k$ lies on the line segment joining them.
It is easily seen~\cite{Rea} that $\textbf{x}$ and $\textbf{y}$ are mutually visible if and only if $\gcd(x_1- y_1, \ldots, x_k -y_k) =1$. In this paper,
we deal with sets defined by various visibility conditions. The questions belong to a big body of questions relating to lattice points.

\subsection{Visibility from the Origin}
Dirichlet considered the ``size'' of the set $V_2$ of points visible from the origin in $\N^2$.
More precisely, the \textit{asymptotic density} of a set $A\subseteq \N^k$, denoted by $D(A)$, is defined by
$$D(A) = \lim_{L\to \infty} \frac{\left|A\cap [1,L]^k\right|}{L^k},$$
provided the limit exists.
Dirichlet showed that $D(V_2) = 1/\zeta(2)$, where $\zeta$ is the Riemann zeta function. In fact, Dirichlet's result was stated in terms of the function $\Phi(L)= \sum_{n=1}^L \varphi(n)$, where $\varphi$ is Euler's totient function, directly related to our problem since
$$\left| V_2\cap [1,L]^2\right|= 2\Phi(L)-1.$$
Denote:
$$E(L)=\Phi(L) -\frac{3}{\pi^2}L^2.$$
Dirichlet showed that the error $E(L)$ is bounded by $O(L^{\delta})$ for some $1<\delta <2$~(see, for example, \cite{ERDOS}). The error term has been improved later \cite{MERF, WALFISZ}, and the currently best known bound, due to Liu~\cite{LIU}, is $O\left(L(\log L)^{2/3} (\log \log L)^{1/3}\right)$.
Pillai and Chowla~\cite{PILLAI} showed that, on the other hand, 
$$E(L)= \Omega(L \log \log \log L),$$
and 
$$\sum_{n=1}^LE(n)\sim \frac{3}{2\pi^2}L^2.$$
Sylvester~\cite{SYLVE1,SYLVE2} conjectured that $E(L)>0$ for all positive $L$. However, Sarma~\cite{SARMA} observed that $E(820)<0$, and
Erd\H{o}s and Shapiro~\cite{ERDOS} proved that $E(L)$ changes sign infinitely often and, moreover,
$$E(L)=  \Omega_{\pm}(L \log \log \log \log L).$$
Currently, the best-known estimate in this direction, due to Montgomery~\cite{MONT}, is
\begin{equation}\label{intro, montgomery}
E(L) =  \Omega_{\pm}(L \sqrt{\log \log L}).
\end{equation}
Montgomery also conjectured that $E(L) = O(L \log \log L)$ and $E(L) =\Omega_{\pm}(L \log \log L)$.

Lehmer~\cite{DNL} extended Dirichlet's result to any dimension $k\geq 3$, showing that the asymptotic density of the set of points of $\N^k$, visible from the origin, is $1/\zeta(k)$. Nymann~\cite{Nym} bounded the error in this case by $O(L^{k-1})$. We note, however, that the proportion of visible points is not close to the asymptotic density for all large cubes. In fact, by a simple use of the Chinese Remainder Theorem, we can see that there exist arbitrarily large cubes in $\Z^k$, containing no points visible from $\textbf{0}\in \Z^k$ (see~\cite[Theorem 5.29]{Apo}).

\subsection{Simultaneous Visibility}
Given a set $S\subseteq \Z^k$, denote by $V(S)$ the set of points of $\Z^k$, visible simultaneously from all points of $S$.
The set $S$ is \textit{admissible} if every two points in $S$ are mutually visible. Let $S$ be an admissible set of cardinality $r$ in $\Z^k$.
Rearick~\cite{ReaM} showed that $V(S,[1,L]^k)= |V(S)\cap [1,L]^k|$ is given by
\begin{equation}\label{admi}
V(S,[1,L]^k) = L^k\prod_{p\in \mathcal{P}}\left(1- \frac{r}{p^k}\right) + E(L), \end{equation}
where $\mathcal{P}$ is the set of all primes and the error term $E(L)$ satisfies:
\begin{equation}\label{Reerror}
E(L) = \begin{cases} 
 O(L^{k-1}), & \qquad   r< k-1, \\
O(L^{k- \frac{k-1}{r}+\varepsilon}), ~\forall~\varepsilon > 0, & \qquad r\geq k-1.
\end{cases}
\end{equation}

Liu, Lu, and Meng \cite{LU} considered simultaneous visibility along curves. In some cases, their results, restricted to visibility along straight lines, improve Rearick's bound on the error (see also \cite{CHAUBEY}).

Rumsey~\cite{Rum} dealt with the case where $S$ is an arbitrary subset of $\Z^k$. Let 
$$\pi_p: \Z^k \to (\Z/ p\Z)^k, \qquad p\in \mathcal{P},$$ 
be the natural projection. Put:
\begin{equation}\label{S(p)}
s(p) = |\pi_p(S)|, \qquad p\in \mathcal{P}.
\end{equation}
In the case when $S$ is finite set, Rumsey~\cite{Rum} showed that 
\begin{equation}\label{s(p)}
D(V(S)) = \prod_{p\in \mathcal{P}}\left(1 - \frac{s(p)}{p^k}\right).
\end{equation}
He also generalized this result to the case of infinite sets $S$, satisfying appropriate conditions.

We continue with the same setup, and consider simultaneous visibility from any finite set of lattice points, not necessarily admissible. Unlike all previous studies, we do not confine ourselves to the set of visible points in a large cube $[1, L]^k$ starting at the origin. Rather, we consider visible points in any large box located anywhere. Thus, let
\begin{equation}\label{box}
B =J_1\times J_2 \times \cdots \times J_k,
\end{equation} 
be a box in $\Z^k$, where $J_i= [M_i,M_i +L_i)$ for some integers $M_i$ and $L_i$ for $1\leq i \leq k$. We may assume, without loss of generality, that $L_1\geq \ldots \geq L_k$.
\begin{thm}\label{main thm}
Let $S$ be a finite subset of $\Z^k$ of cardinality $r$, and let $B$ be as in \eqref{box} and $s(p)$ as in \eqref{S(p)}. Then the number of points of $B$, visible from $S$, satisfies, as $L_k = \min\{L_1,\ldots, L_k\}\to \infty$,
\begin{equation}\label{section 2 , formula for main result}
V(S,B) \leq L_1\cdots L_k\prod_{p \in \mathcal{P}}\left(1 - \frac{s(p)}{p^k}\right) +  E,
\end{equation}
where
\begin{equation}\label{main, theorem error term in section 2}
 E = \begin{cases} 
 O\left(\max\{L_1\log^{3r}L_2, (L_1L_2)^{2/3+\varepsilon}\}\right),\forall \varepsilon>0, &\qquad  k = 2,\\
     O(L_1\cdots L_{k-1}), & \qquad   k\geq 3.\\
    \end{cases}
\end{equation}
In particular, if $L_1=L_2=\ldots =L_k= L$, then
$$V(S,B) \leq L^k\prod_{p \in \mathcal{P}}\left(1 - \frac{s(p)}{p^k}\right) + E,$$
where
\begin{equation*}
 E = \begin{cases} 
 O\left(L^{4/3+\varepsilon}\right),\forall \varepsilon>0, & \qquad k = 2,\\
     O(L^{k-1}), &\qquad    k\geq 3.\\
    \end{cases}
 \end{equation*}
 \end{thm}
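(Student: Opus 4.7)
My plan is to view $V(S,B)$ as a sifted count: $y\in V(S)$ iff $\pi_p(y)\notin\pi_p(S)$ for every prime $p$, so $V(S,B)$ counts points of $B$ surviving a local sieve. For squarefree $d$, set
$$N_d(S,B)=\bigl|\{y\in B:\pi_p(y)\in\pi_p(S)\ \text{for every }p\mid d\}\bigr|.$$
By CRT, $N_d$ counts points lying in $s(d):=\prod_{p\mid d}s(p)\le r^{\omega(d)}$ residue classes modulo $d$, so $N_d(S,B)=s(d)L_1\cdots L_k/d^k+R_d$ with the standard box-remainder bound $|R_d|\ll s(d)L_1\cdots L_{k-1}/d^{k-1}$ for $d\le L_k$.

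For $k\ge 3$ I will apply Möbius truncation to primes $p<z$:
$$V(S,B)\le \sum_{d\mid P(z)}\mu(d)N_d(S,B),\qquad P(z)=\prod_{p<z}p.$$
The main term $L_1\cdots L_k\prod_{p<z}(1-s(p)/p^k)$ differs from the full Euler product by $O(L_1\cdots L_k\,z^{-(k-2)})$, while the accumulated remainder satisfies
$$\sum_{d\mid P(z)}|R_d|\ll L_1\cdots L_{k-1}\prod_{p<z}\!\Bigl(1+\tfrac{r}{p^{k-1}}\Bigr)=O(L_1\cdots L_{k-1}),$$
uniformly in $z$ since $\sum_p p^{-(k-1)}$ converges for $k\ge 3$. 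Choosing any $z$ with $z^{k-2}\ge L_k$ then yields $E=O(L_1\cdots L_{k-1})$.

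The case $k=2$ is the hard one: $\sum_p p^{-1}$ diverges and the bracketed product above grows like $(\log z)^r$, so the naive sieve collapses. I will instead invoke Selberg's upper bound sieve at level $D$, obtaining
$$V(S,B)\le \frac{L_1L_2}{G(D)}+\sum_{\substack{d\le D^2\\ d\mid P(z)}}3^{\omega(d)}|R_d|,$$
with $G(D)\to\prod_p(1-s(p)/p^2)^{-1}$, at a rate I must pin down as $1/G(D)-\prod_p(1-s(p)/p^2)\ll(\log D)^{C}/D$. The remainder is handled via $3^{\omega(d)}s(d)\le(3r)^{\omega(d)}$ and a Wirsing-type divisor sum, yielding $\ll L_1(\log D)^{3r}$, so the overall error is $O(L_1L_2/D+L_1\log^{3r}D)$. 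Taking $D\sim L_2$ produces the first branch $O(L_1\log^{3r}L_2)$, while for comparable $L_1,L_2$ I will upgrade the pointwise bound on $|R_d|$ to a mean estimate that exploits cancellation between residue classes in the two-dimensional box, and then choose $D\sim(L_1L_2)^{1/3}$ to produce the second branch $O((L_1L_2)^{2/3+\varepsilon})$.

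The $k\ge 3$ case will thus be essentially routine bookkeeping once the sieve is set up. The main obstacle lies in $k=2$: I must establish the Euler-product comparison $1/G(D)-\prod_p(1-s(p)/p^2)\ll(\log D)^{C}/D$ with precision sharp enough to extract the logarithmic exponent $3r$, and, in the regime where $L_1$ and $L_2$ are comparable, replace the pointwise remainder bound by a discrepancy-type argument in two dimensions that delivers the exponent $2/3$.
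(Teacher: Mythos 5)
Your $k=2$ outline is essentially the paper's argument (Selberg's sieve, level $D$, remainder handled via $3^{\omega(d)}s(d)\le(3r)^{\omega(d)}$, Euler-product comparison $1/G(D)-\prod_p(1-s(p)/p^2)\ll D^{-1+\varepsilon}$, then optimize $D$), but two details need repair. First, your claimed remainder bound $\ll L_1(\log D)^{3r}$ omits the contribution of the $O(1)$ part of each $R_d$: summing $(3r)^{\omega(d)}$ over $d\le D^2$ gives an additional $D^{2+\varepsilon}$, which must enter the balancing (it is harmless after optimizing, but it cannot be dropped). Second, no ``mean estimate exploiting cancellation'' is needed for the branch $(L_1L_2)^{2/3+\varepsilon}$: with the trivial pointwise remainder one already has error $O(L_1L_2/D^{1-\varepsilon}+L_1\log^{3r}D+D^{2+\varepsilon})$, and the choice $D\simeq(L_1L_2)^{1/3}$ gives exactly the stated exponent; the discrepancy-type upgrade you defer to is not an obstacle in the paper, it is simply unnecessary.

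The genuine gap is in your $k\ge3$ case, which you call routine. In the Eratosthenes--Legendre truncation $\sum_{d\mid P(z)}\mu(d)N_d$, the divisors $d$ of $P(z)$ range up to $P(z)\approx e^{(1+o(1))z}$, and your estimate $\sum_{d\mid P(z)}|R_d|\ll L_1\cdots L_{k-1}\prod_{p<z}\bigl(1+\tfrac{r}{p^{k-1}}\bigr)$ uses the bound $|R_d|\ll s(d)L_1\cdots L_{k-1}/d^{k-1}$ for \emph{all} $d\mid P(z)$, although (as you yourself note) it is only valid for $d\le L_k$; the correct remainder is $O\bigl(s(d)(\frac{L_1\cdots L_{k-1}}{d^{k-1}}+\cdots+\frac{L_1}{d}+1)\bigr)$, and already the $O(s(d))$ parts contribute at least $\prod_{p<z}(1+1)=2^{\pi(z)}$. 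Since your truncation error forces $z\ge L_k^{1/(k-2)}$, this is $\exp\!\bigl(cL_k^{1/(k-2)}/\log L_k\bigr)$, which for a cube $L_1=\cdots=L_k=L$ dwarfs $L^{k-1}$; so the accumulated remainder is \emph{not} $O(L_1\cdots L_{k-1})$ uniformly in $z$, and no choice of $z$ rescues the plain Legendre sieve (small $z$ ruins the main-term comparison, large $z$ ruins the remainder). This is precisely why the paper runs the same Selberg sieve for every $k\ge2$: the weights are supported on $d<z$, so only $d=[d_1,d_2]<z^2$ occur in the remainder, each counted with multiplicity $3^{\omega(d)}$, giving $\Sigma_2=O(L_1\cdots L_{k-1}+L_1\log^{3r}z+z^{2+\varepsilon})$, and the choice $z=L_k^{3/(2(k-1))}$ then yields $E=O(L_1\cdots L_{k-1})$. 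To fix your argument for $k\ge3$ you should either apply your own Selberg setup from the $k=2$ case verbatim, or replace Legendre by a truncated (Brun-type) sieve with controlled support; as written, the $k\ge3$ step fails.
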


 \begin{remark}\emph{
 For $k=2$, if $L_2$ is not very small relative to $L_1$, more precisely if $L_2\geq ~L_1^{1/2}$, the maximum in the first line of \eqref{main, theorem error term in section 2} is attained by the second term, so that $E= O\left((L_1L_2)^{2/3 + \varepsilon}\right)$. If $L_2$ is smaller, then  $E= O\left(L_1\log ^{3r}L_2\right)$.}
 \end{remark}
 
 Note that, in \eqref{section 2 , formula for main result}, we have an inequality in one direction only. As mentioned above, $V(S, B)$ may even vanish for arbitrary large $L_i$-s.
 Comparing the upper bound in Theorem~\ref{main thm} with those of Rearick \cite{ReaM} and Liu et al. \cite{LU} (for admissible sets $S$ of cardinality at least 2), where their results apply, we see that our results are better in some cases and equally good in others. The improvements are due to the method used. We use here the higher-dimensional Selberg sieve, whereas former papers used elementary methods.

 As mentioned above, \eqref{section 2 , formula for main result} cannot possibly have a counterpart with the direction of the inequality reversed, as there are always arbitrarily large boxes $B$ for which $V(S, B)=0$. We do believe, however, that \eqref{section 2 , formula for main result} holds when inverting the direction of the inequality for ``most'' boxes. In particular, it seems plausible that the reverse inequality holds for cubes of the form $B=[1, L]^k$.
 
 For $k\geq 3$ , $S=\{(0,\ldots,0)\}$ and $B= [1,L]^k$, it readily follows from Takeda~\cite{TAKEDA} that $E= \Omega(L^{k-1})$. We will show that the error may be both positive and negative.
 
\subsection{Schnirelmann Density}
One may also be interested in the Schnirelmann density of the set $V(S)$. Recall that the \textit{Schnirelmann density} of a set $A\subseteq \N$ is given by:
$$\SD(A)= \inf_{L\in \N} \frac{\left|A\cap [1,L]\right|}{L}.$$
(See \cite{SCHNI}; for details on the Schnirelmann density we refer to \cite{NIVEN}.)
Similarly, we can define the Schnirelmann density of a set $A\subseteq \N^k$ by:
$$\SD(A)= \inf_{L\in \N}\frac{\left|A\cap [1,L]^k\right|}{L^k}.$$
Later, we will discuss the Schnirelmann density of some sets of visible points. (We mention that, while for regular density it matters little whether we consider the set of visible points in $\N^k$ or in $\Z^k$, when it comes to Schnirelmann density we will consider only $\N^k$.)
We note that our interest in the Schnirelmann density of sets of visible points started from a question of Moser and Pach, posed in \cite[Problem 64]{MOSER} (which is a forerunner of \cite{PACH}). There they asked about an estimate and bounds for $\SD(V_2)$. They raised a similar question regarding the set of points simultaneously visible from $(1,0)$ and $(0,1)$.

The Schnirelmann density of a set is, in general, smaller than its regular density; as an extreme example, we have $D(\{2,3,4,\ldots\})= 1$, while $\SD\{2,3,4,\ldots\})= 0$. Calculating $|V_2\cap[1,L]^2|$ for some values of $L$, one may be tempted to believe that $\SD(V_2)= D(V_2)$. In fact, not until $L=820$ does one get a square $[1, L]^2$ with $|V_2\cap[1,L]^2|< L^2/\zeta(2)$ (see Sarma~\cite{SARMA}). The following result shows that there are infinitely many counter-examples in every dimension.

\begin{thm}\label{visibility Omega +- theorem}
Let $k \geq 2$, and let $V_k= V(\{(0,\ldots,0)\})$ be the set of points visible from the origin in $\N^k$. Then
\begin{equation}\label{results, omega +-}
\left | V_k \cap [1,L]^k\right| = \frac{L^k}{\zeta(k)} + \Omega_{\pm}(L^{k-1}).
\end{equation}
In particular, the Schnirelmann density of $V_k$ is strictly below the regular density:
\begin{equation}\label{main result, schini}
\SD(V_k)< D(V_k),\qquad k\geq 2.
\end{equation}
\end{thm}

\begin{remark}\label{result, improved omega+}\emph{For $k=2$, it follows from \eqref{intro, montgomery} that the second term on the right-hand side of \eqref{results, omega +-} may be replaced by $\Omega_{\pm}(L \sqrt{\log \log L})$. By Nymann's result~\cite{Nym}, mentioned above, no such improvement is possible for $k\geq 3$. Still, for $k\geq 3$, the $\Omega_{+}$-direction of \eqref{results, omega +-} may be made explicit as follows:
$$\left | V_k \cap [1,L]^k\right| = \frac{L^k}{\zeta(k)} +k\left(\frac{1}{\zeta(k)}- \frac{1}{\zeta(k-1)}\right)L^{k-1} + \Omega_{+}(L^{k-1}).$$
See Remark \ref{improved omega + bound} below for further details.}
\end{remark}

How does one calculate $\SD(V_k)$ for a given $k$? Theorem \ref{visibility Omega +- theorem} guarantees that a finite computation will provide an $L_0$ such that
$$\frac{\left|V_k\cap [1,L_0]^k\right|}{L_0^k}< \frac{1}{\zeta(k)}.$$
Suppose we have an explicit lower bound on $\frac{\left|V_k\cap [1,L]^k\right|}{L^k}- \frac{1}{\zeta(k)}$, which goes to 0 as $L\to \infty$. Then the computation of $\SD(V_k)$ becomes a finite problem. Indeed, we only need to compute $\frac{\left|V_k\cap [1,L]^k\right|}{L^k}$ up to the point where the error becomes smaller in absolute value than $\left|\frac{\left|V_k\cap [1,L_0]^k\right|}{L_0^k}- \frac{1}{\zeta(k)}\right|$.
We will apply this method to compute $\SD(V_2)$ and  $\SD(V_3)$, as follows.

\begin{prop}\label{Schni, proposition for visibiliy for k=2}
For $k=2$,
$$\SD(V_2) = \frac{\left|V_2\cap[1,1276]^2\right|}{1276^2}= 0.60787\ldots< 0.60792\ldots= \frac{1}{\zeta(2)}= D(V_2).$$
\end{prop}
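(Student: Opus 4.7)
The plan is to reduce the determination of $\SD(V_2)$ to a finite computer search, following the recipe sketched immediately before the statement. From the classical identity $|V_2 \cap [1,L]^2| = 2\Phi(L) - 1$ and the definition $E(L) = \Phi(L) - \frac{3}{\pi^2}L^2$, we rewrite
$$\frac{|V_2 \cap [1,L]^2|}{L^2} = \frac{1}{\zeta(2)} + \frac{2E(L) - 1}{L^2},$$
so that $\SD(V_2) = \frac{1}{\zeta(2)} - \sup_{L \geq 1}\frac{1 - 2E(L)}{L^2}$; by Theorem~\ref{visibility Omega +- theorem} this supremum is strictly positive, so the fact that $\SD(V_2) < D(V_2)$ is already secured, and what is at stake is its exact value.

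The upper bound $\SD(V_2) \leq |V_2 \cap [1,1276]^2|/1276^2$ is established by a single direct evaluation at $L = 1276$, which yields $0.60787\ldots$, strictly below $1/\zeta(2) = 0.60792\ldots$; denote the resulting gap by $\varepsilon > 0$. For the matching lower bound, I would invoke an explicit effective upper bound of the form $|E(L)| \leq f(L)$---for instance, an effective version of Walfisz's estimate, or even the cruder but perfectly adequate $|E(L)| \leq C L \log L$ with $C$ extractable from classical Mertens-type arguments. Solving $(2f(L)+1)/L^2 < \varepsilon$ for $L$ then produces an explicit threshold $L^*$ such that for every $L > L^*$,
$$\frac{|V_2 \cap [1,L]^2|}{L^2} > \frac{1}{\zeta(2)} - \varepsilon = \frac{|V_2 \cap [1,1276]^2|}{1276^2}.$$

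It then remains to carry out a direct computer search over $L = 1, 2, \ldots, L^*$, evaluating $|V_2 \cap [1,L]^2|/L^2$ (which can be done incrementally in $O(L^*)$ time once one has sieved Euler's totient up to $L^*$) and verifying that its minimum on this range is attained uniquely at $L = 1276$, with the stated value. The only serious obstacle is practical rather than conceptual: the size of $L^*$, and hence the scale of the numerical search, depends on how sharp an explicit constant one can extract for $f(L)$, so one should pick a bound sharp enough to keep $L^*$ within computational reach---the classical effective bounds on $E(L)$ do so comfortably.
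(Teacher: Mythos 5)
Your proposal is correct and follows essentially the same route as the paper: rewrite the counting function via $\Phi(L)$, use the computed negative gap at the witness value, invoke an explicit bound of the shape $|E(L)|=O(L\log L)$ to get a finite threshold $L^*$, and finish with a computer search up to $L^*$. The only cosmetic difference is that the paper derives its effective one-sided bound $\frac{|V_2\cap[1,L]^2|}{L^2}\geq \frac{6}{\pi^2}-\frac{\log L}{L}$ (for $L\geq 9$) directly from the M\"obius expansion with fractional parts, and anchors the threshold ($5\cdot 10^5$) on the gap at $L_0=820$ rather than at $1276$, whereas you defer to a citable effective estimate for $E(L)$ with an unspecified constant.
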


\begin{prop}\label{Schni, proposition for visibiliy for k=3}
For $k=3$,
$$\SD(V_3) = \frac{\left|V_3\cap[1,169170]^3\right|}{169170^3}=0.831907366\ldots<0.831907372\ldots= \frac{1}{\zeta(3)}= D(V_3).$$
\end{prop}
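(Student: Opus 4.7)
The plan is to follow the recipe laid out in the paragraph preceding the proposition: reduce the computation of $\SD(V_3)$ to a finite search by combining an effective (not merely asymptotic) version of Lehmer--Nymann's formula with a direct computation at the candidate minimizer $L_0=169170$.

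First, I would establish an explicit inequality of the form
$$|V_3\cap[1,L]^3| \;\geq\; \frac{L^3}{\zeta(3)} - C\,L^2, \qquad L\geq 1,$$
with a numerical constant $C$. This follows from the Möbius identity $|V_3\cap[1,L]^3|=\sum_{d=1}^{L} \mu(d)\lfloor L/d\rfloor^{3}$ by writing $\lfloor L/d\rfloor = L/d - \{L/d\}$, expanding the cube, bounding each fractional-part sum term by term, and controlling the tail $\sum_{d>L}\mu(d)/d^{3}=O(1/L^{2})$. One may additionally absorb the secondary main term $3\bigl(1/\zeta(3)-1/\zeta(2)\bigr)L^{2}$ highlighted in Remark \ref{result, improved omega+} into the leading part, which shrinks the effective $C$ and hence the range over which a finite computation must be carried out.

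Next, I would compute the ratio $\rho_0 := |V_3\cap[1,L_0]^3|/L_0^3$ at the claimed minimizer $L_0 = 169170$, obtaining $\rho_0 = 0.831907366\ldots$, and set $\Delta := 1/\zeta(3)-\rho_0 \approx 6\times 10^{-9}$. The effective inequality then forces $|V_3\cap[1,L]^3|/L^3 \geq 1/\zeta(3)-C/L \geq \rho_0$ whenever $L\geq L^{\ast}:=C/\Delta$, so no such $L$ can beat $L_0$. It then suffices to check, by direct computation, that the minimum of $|V_3\cap[1,L]^3|/L^3$ over $1\leq L \leq L^{\ast}$ is attained uniquely at $L = L_0$, and to compare the resulting value with $1/\zeta(3)$ to conclude strict inequality.

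The main obstacle is the final, computational, step: the cutoff $L^{\ast}$ is very large, so a naive triple-loop count is infeasible and one must organize the work efficiently. An incremental strategy---updating $|V_3\cap[1,L]^3|$ as $L$ increases by one, by enumerating only the $O(L^{2})$ lattice triples with maximum coordinate equal to $L$ and testing coprimality of the three coordinates---keeps the total cost polynomial in $L^{\ast}$ and makes the verification tractable; equivalently one may evaluate the Möbius sum by Dirichlet-hyperbola blocking. Driving the constant $C$ as small as possible by peeling off further main terms from the Möbius identity is the practical lever for keeping $L^{\ast}$, and thus the computation, within reach.
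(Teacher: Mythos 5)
Your proposal is correct and follows essentially the same route as the paper: the paper likewise expands the exact M\"obius-sum identity in fractional parts to get the effective bound $\frac{\left|V_3\cap[1,L]^3\right|}{L^3}\geq \frac{1}{\zeta(3)}-\frac{3\zeta(2)}{L}-\frac{2\log L+4}{L^2}$ and then reduces $\SD(V_3)$ to a finite computer search. The only (harmless) difference is that the paper anchors the cutoff at the first exceptional integer $L_0=122760$ (error $\approx -2.95\times10^{-9}$, search up to $10^{10}$) rather than at the eventual minimizer $169170$ as you do.
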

What about $\SD(V_4)$ ? By a computer program, we have verified that 
$\frac{\left|V_4\cap [1,L]^4\right|}{L^4}>\frac{1}{\zeta(4)}$ for every $L\leq 10^9$. P{\'e}termann \cite{PETER-1} opined, based on his computations in \cite{PETER}, on which Lemma \ref{jordan omega + result} below also hinges, that the smallest $L$ for $\frac{\left|V_4\cap [1,L]^4\right|}{L^4}<\frac{1}{\zeta(4)}$ may be of order of magnitude $10^{12}$.

We can similarly deal with the abovementioned problem regarding simultaneous visibility in $\N^2$.
\begin{prop}\label{Schni, proposition visibility from (0,1) and (1,0)}\emph{
Consider the set $A= V(\{(1,0),(0,1)\})$ of
points simultaneously visible from both $(0,1)$ and $(1,0)$ in $\N^2$. Then
$$\SD(A)=  \frac{\left|A\cap[1,7]^2\right|}{7^2}=\frac{15}{49}= 0.306\ldots< 0.322\ldots= \prod_{p\in \mathcal{P}}\left(1-\frac{2}{p^2}\right)= D(A).$$
}
\end{prop}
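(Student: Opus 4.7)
The plan is to execute the same strategy outlined just before Proposition~\ref{Schni, proposition for visibiliy for k=2}, now with $S=\{(1,0),(0,1)\}$ and $L_0=7$. Since the two base points have distinct images in $(\Z/p\Z)^2$ for every prime $p$, we have $s(p)=2$ throughout, so Rumsey's formula \eqref{s(p)} yields $D(A)=\prod_{p}(1-2/p^2)$. A point $(x,y)\in\N^2$ lies in $A$ precisely when $\gcd(x-1,y)=\gcd(x,y-1)=1$, so one enumerates the $(x,y)\in[1,7]^2$ satisfying both conditions directly; this produces the count $15$ and confirms $15/49<D(A)$.

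The next step is to establish an effective two-sided estimate $\bigl||A\cap[1,L]^2|-L^2 D(A)\bigr|\leq CL^{\alpha}$ for some $\alpha<2$ and some explicit constant $C$. The set $S$ is admissible, since $\gcd(1-0,0-1)=1$, so Rearick's equality \eqref{admi}--\eqref{Reerror} supplies such a bound with $\alpha=3/2+\varepsilon$, in contrast to the one-sided bound in Theorem~\ref{main thm}. The implicit constant can be tracked by writing
$|A\cap[1,L]^2|=\sum_{\gcd(d,e)=1}\mu(d)\mu(e)N(d,e,L)$,
where $N(d,e,L)$ counts $(x,y)\in[1,L]^2$ with $d\mid x-1$, $d\mid y$, $e\mid x$, $e\mid y-1$, and using $|N(d,e,L)-L^2/(de)^2|\leq 2L/(de)+1$ for $de\leq L$ together with $N(d,e,L)\in\{0,1\}$ in the tail $de>L$.

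With $C$ and $\alpha$ in hand, I choose $L_1$ so that $CL^{\alpha-2}<D(A)-15/49\approx 0.016$ for every $L>L_1$. For such $L$, the lower bound $|A\cap[1,L]^2|/L^2\geq D(A)-CL^{\alpha-2}>15/49$ is automatic, and for $1\leq L\leq L_1$ one verifies $|A\cap[1,L]^2|/L^2\geq 15/49$, with equality only at $L=7$, by a direct computer search.

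The main obstacle, as in Propositions~\ref{Schni, proposition for visibiliy for k=2} and~\ref{Schni, proposition for visibiliy for k=3}, is keeping $L_1$ within computational reach: because the gap $D(A)-15/49$ is only about $0.016$, the constant in the error must be tracked carefully, and $L_1$ is likely to be at least in the thousands. The favorable feature here is that $S$ has only two points, so the inclusion--exclusion involves just the two Möbius variables $d$ and $e$ and the accounting of constants remains manageable; the potential improvement from $\alpha=3/2+\varepsilon$ to $\alpha=4/3+\varepsilon$ afforded by Theorem~\ref{main thm} is irrelevant once the finite verification is to be done by computer.
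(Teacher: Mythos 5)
Your proposal is correct and follows the same overall strategy that the paper sets out before Propositions \ref{Schni, proposition for visibiliy for k=2}--\ref{Schni, proposition visibility from (0,1) and (1,0)} (locate $L_0=7$ where the proportion dips below $D(A)$, prove an effective lower bound on $|A\cap[1,L]^2|/L^2$ valid for all $L$, then finish by a finite computer check), but the effective bound itself is obtained by a genuinely different device. You propose the full two-variable M\"obius expansion $|A\cap[1,L]^2|=\sum_{\gcd(d,e)=1}\mu(d)\mu(e)N(d,e,L)$, a Rearick-style Legendre computation, with $\bigl|N(d,e,L)-L^2/(de)^2\bigr|\le 2L/(de)+1$ and $N\in\{0,1\}$ when $de>L$; this is a valid identity (terms with $\gcd(d,e)>1$ vanish automatically since $d\mid x-1$ and $e\mid x$ force $\gcd(d,e)\mid 1$), and the main term correctly reproduces $\prod_p(1-2/p^2)$. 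The paper instead sieves exactly only by the first $s$ primes, applying inclusion--exclusion over divisors of $P_s=p_1\cdots p_s$ via Lemma \ref{floor and cielling}, and disposes of every remaining prime $p\le L$ by the union bound $-\sum_{i=s+1}^{\pi(L)}|I_{p_i}|$; this yields the explicit one-sided estimate $|A\cap[1,L]^2|/L^2\ge\prod_{p}(1-2/p^2)-f(s,L)$ with $f(s,L)$ an elementary finite expression, and $s=10$, $L_1=5000$ already suffice. The trade-off: your bound is two-sided and tends to $0$ as $L\to\infty$ (roughly $L\log^2L/L^2$), whereas the paper's $f(s,L)$ tends, for fixed $s$, only to a small positive constant of size about $2\sum_{i>s}p_i^{-2}$ --- which is all that is needed, since one only has to get below $|E|\approx 0.016$. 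The price of your route is that making the constants numerically explicit is delicate at exactly one point you gloss over: in the tail $de>L$ you must bound, with an explicit constant, the number of coprime squarefree pairs with $N(d,e,L)=1$, which reduces to an explicit estimate for $\sum_{x\le L}d(x-1)d(x)$ (e.g.\ via Cauchy--Schwarz and an explicit bound for $\sum_{n\le L}d(n)^2$); the paper's prime-truncation avoids all divisor-sum correlations, which is why its constants are trivial to track and its verification range stays at $5000$. Note also that only the lower bound is actually needed, so the two-sidedness of your estimate is a bonus rather than a requirement; with the tail constant supplied, your argument goes through.
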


Can one find algorithmically $\SD(V(S))$ for a given set $S$? It is possible to calculate $|V(S)\cap[1, L]^k|$ for larger and larger values of $L$. If an $L_0$ for which $|V(S)\cap[1, L_0]^k|<L_{0}^k\cdot D(V(S))$ is found, then it is possible in principle to calculate $\SD(V(S))$. To this end, one needs to follow the proof of Theorem \ref{main thm} in an effective way $-$ replace all big oh estimates by estimates with an explicit constant. Let $E= D(V(S))-|V(S)\cap[1, L_0]^k|/L_0^k$.
Once this has been done, we can find an $L_1$ such that 
$$\left|\frac{|V(S)\cap [1,L]^k|}{L^k}- D(V(S))\right|<E,\qquad L\geq L_1.$$
Calculating $|V(S)\cap[1, L]^k|$ for all $L<L_1$, we find $\SD(V(S))$. However, as long as an $L_0$ as above is not found, we cannot apply this method. Moreover, even if we know that $\SD(V(S))<D(V(S))$, since we do not know how to bound $L_0$ from above and thus bound $E$ from below, we do not have an algorithmic way to calculate $\SD(V(S))$.

In the case of Propositions \ref{Schni, proposition for visibiliy for k=2} and \ref{Schni, proposition for visibiliy for k=3}, we know beforehand by Theorem \ref{visibility Omega +- theorem} that the Schnirelmann density is smaller than the regular density.
In their proofs of these propositions, we found $L_0$, bounded $E$ from below, and applied the above method to calculate the Schnirelmann density. For other sets $S$, it might be the case that $\SD(V(S))=~D(V(S))$. In such cases, our methods cannot be used to prove this equality.

A family of finite sets $S\subset \Z^k$, for which we trivially have $\SD(V(S))= D(V(S))$, is the family of sets for which $s(p)= p^k$ for some prime $p$; in this case, $\SD(V(S))= D(V(S))=0$. In view of Theorem \ref{visibility Omega +- theorem}, and Propositions \ref{Schni, proposition for visibiliy for k=2}-\ref{Schni, proposition visibility from (0,1) and (1,0)} above, Examples \ref{example, (0,1),(1,0)}-\ref{example, (0,0,0),(1,0,0)(0,1,0),(0,0,1)} and Tables \ref{table:1},\ref{table:2}, \ref{table:3} below, we raise
\begin{conjecture}\emph{Let $S\subset \Z^k$ be any finite subset. If $D(V(S))>0$, then $\SD(V(S))< D(V(S))$.}
\end{conjecture}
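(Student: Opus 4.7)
The plan is to attack the conjecture by upgrading it to a quantitative $\Omega_{-}$-result: for every finite $S\subset\Z^k$ with $D(V(S))>0$, one should have
$$|V(S)\cap[1,L]^k| - L^k\, D(V(S)) = \Omega_{-}\bigl(L^{k-1}\bigr).$$
This would simultaneously generalize Theorem \ref{visibility Omega +- theorem} (which handles $S=\{\0\}$) and immediately settle the conjecture, since a single cube $[1,L_0]^k$ with $|V(S)\cap[1,L_0]^k|<L_0^k D(V(S))$ already forces $\SD(V(S))<D(V(S))$.

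First I would derive an explicit error formula. By Möbius inversion over squarefree $d$,
$$|V(S)\cap[1,L]^k| = \sum_{d\ \mathrm{squarefree}} \mu(d)\, N_d(L),$$
where $N_d(L)$ counts $x\in[1,L]^k$ with $\pi_p(x)\in\pi_p(S)$ for all $p\mid d$. The Chinese Remainder Theorem supplies $\prod_{p\mid d} s(p)$ such residue classes modulo $d$, so
$$N_d(L) = \frac{L^k}{d^k}\prod_{p\mid d} s(p) + R_d(L),\qquad R_d(L)=O\!\bigl(\tfrac{L^{k-1}}{d^{k-1}}\textstyle\prod_{p\mid d} s(p)\bigr).$$
Summing and using $\prod_p(1-s(p)/p^k)=D(V(S))$, the leading part of the error $E(L)=|V(S)\cap[1,L]^k|-L^kD(V(S))$ reduces, after standard manipulations, to an oscillating sawtooth expression of the shape
$$-k\,L^{k-1}\sum_{d\le L} \frac{\mu(d)\prod_{p\mid d} s(p)}{d^{k-1}}\,\psi(L/d),\qquad \psi(t)=t-\lfloor t\rfloor-\tfrac12,$$
plus admissible lower-order terms.

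The core analytic step is then to show that this sawtooth sum is $\Omega_{-}(1)$. For $S=\{\0\}$, $\prod_{p\mid d}s(p)=1$ and the sum collapses to the classical $\sum_d\mu(d)\psi(L/d)/d^{k-1}$, whose oscillation underlies Theorem \ref{visibility Omega +- theorem} and the work of Pillai--Chowla, Erd\H{o}s--Shapiro, and Pétermann. For general $S$, the weight $d\mapsto\prod_{p\mid d}s(p)$ is multiplicative and coincides with $r^{\omega(d)}$ for all but finitely many squarefree $d$, where $r=|S|$; the associated Dirichlet series is a twisted Euler product analytic in a suitable half-plane. I would adapt the Perron/Mellin technique from the classical case, inverting the sawtooth sum as a contour integral, pulling the contour leftward, and extracting $\Omega_{-}$-behavior either from the singularity structure of the Dirichlet series or from a Kronecker/Weyl-type equidistribution argument applied to the fractional parts $\{L/d\}$ for $d$ in a short initial range.

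The main obstacle is precisely this sign extraction. For the classical case, the $\Omega_{-}$ bound exploits delicate properties of $1/\zeta(k)$ and Jordan's totient; for general $S$ one must rule out the possibility that a conspiratorial arithmetic structure among the residue sets $\pi_p(S)$ forces the sawtooth sum to remain non-negative. Establishing a lower bound on the negative fluctuations uniform in $S$ likely requires a new mean-value input on multiplicative functions twisted by the sequence $\{s(p)\}_{p\in\mathcal{P}}$, generalizing Pétermann's analysis. I would not be surprised if the cleanest proof has to be supplemented by direct $S$-dependent computations in a few exotic cases, in the spirit of the explicit verifications behind Propositions \ref{Schni, proposition for visibiliy for k=2}--\ref{Schni, proposition visibility from (0,1) and (1,0)}.
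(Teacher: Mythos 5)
The statement you are addressing is posed in the paper as a conjecture, not a theorem: the authors offer no proof, only supporting evidence (Theorem \ref{visibility Omega +- theorem}, Propositions \ref{Schni, proposition for visibiliy for k=2}--\ref{Schni, proposition visibility from (0,1) and (1,0)}, and the numerical data in Tables \ref{table:1}--\ref{table:3}). Your text is likewise a programme rather than a proof, and its central step is exactly the open point. You reduce the conjecture to showing that the twisted sawtooth sum $\sum_{d\le L}\mu(d)s(d)\psi(L/d)/d^{k-1}$ takes values bounded away from an appropriate threshold infinitely often with the favourable sign, and then you write that ``the main obstacle is precisely this sign extraction.'' That obstacle is the whole problem. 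Even in the special case $S=\{\0\}$, the needed sign information is not a formal consequence of the Perron/Mellin setup: for $k\ge 3$ it rests on Takeda's Lemma \ref{jordan omega - result} and P\'etermann's Lemma \ref{jordan omega + result}, whose proofs exploit the specific Dirichlet series $1/\zeta$, and for $k=2$ it is the Erd\H{o}s--Shapiro/Montgomery circle of results. No analogue is known for a general multiplicative weight $s(\cdot)$ attached to an arbitrary finite $S$, and nothing in your sketch rules out the ``conspiratorial'' behaviour you mention; so the proposal does not establish the conjecture, nor even a single new case of it beyond those already treated in the paper.

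Two further technical points would need repair even if the sign extraction were available. First, your error bookkeeping is too coarse in low dimension: summing $R_d(L)=O\bigl(s(d)L^{k-1}/d^{k-1}\bigr)$ over $d\le L$ gives, for $k=2$, a contribution of size about $L\log^{r}L$ plus $\sum_{d\le L}s(d)\gg L(\log L)^{r-1}$, which swamps the claimed main oscillation of order $L^{k-1}=L$; this is precisely why the paper resorts to Selberg's sieve for the upper bound in Theorem \ref{main thm}, and why the exact expansion \eqref{jordan, number of visible points with fractional part in error} is derived only for $S=\{\0\}$, where $s(d)\equiv 1$ and the analysis of Lemma \ref{jordan, sum of J(k-1) with erroor} applies. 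Second, it is not true that $s(d)=\prod_{p\mid d}s(p)$ coincides with $r^{\omega(d)}$ for all but finitely many squarefree $d$: only finitely many primes are exceptional, but infinitely many squarefree $d$ are divisible by them, so the correct statement is that $s$ is a multiplicative perturbation of $r^{\omega}$ at finitely many primes. Also note that your proposed strengthening $|V(S)\cap[1,L]^k|-L^kD(V(S))=\Omega_{-}(L^{k-1})$ is itself unproved and possibly stronger than the conjecture requires; the conjecture only needs one cube with a deficit, but your plan supplies no mechanism for producing even that for general $S$.
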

\subsection{Organization of the paper}
In Section \ref{section, preliminaies} we make some preparations towards the proofs. In Sections~\ref{section, selberg} and~\ref{section, selberge, main result} make use of a higher-dimensional Selberg sieve to prove our main result. Section~\ref{occelation from orizin}, we prove Theorem \ref{visibility Omega +- theorem}. Section~\ref{section shini} is devoted to the proofs of Propositions \ref{Schni, proposition for visibiliy for k=2}-\ref{Schni, proposition visibility from (0,1) and (1,0)}. In Section~\ref{section, ergodic} we discuss the visibility problems from an ergodic theoretical viewpoint. Section \ref{section, statistical} contains some numerical results and examples. In Section \ref{Section on visibility in discs} we consider visibility within discs instead of cubes.

\begin{acknowledgement}\emph{
The authors express their gratitude to Y.-F. S. P{\'e}termann and W. Takeda for lengthy correspondence regarding their papers \cite{PETER} and \cite{TAKEDA}. We also thank the referee for the careful comments, which contributed a lot to the readability and organization of the paper.}
\end{acknowledgement}

\section{Preliminaries}\label{section, preliminaies}
Recall that the M\"{o}bius function $\mu$ is defined on the set of positive integers by
\begin{equation}\label{defination of mu}
 \mu(d) = \begin{cases} 
     1, & \qquad d= 1,\\
     (-1)^r, & \qquad  d \ \mbox{is a product of} \ r\  \mbox{distinct primes},\\
     0, &  \qquad d\  \mbox{is not square-free}.
    \end{cases}
     \end{equation}
If $f$ is any multiplicative function, then
\begin{equation}\label{defination of multiplecative}
\sum_{d\mid n}\mu(d)f(d) = \prod_{p\mid n}(1-f(p)), \qquad n\in \N.
\end{equation}
In particular, taking $f \equiv 1$,
\begin{equation}\label{sum of mu}
 \sum_{d\mid n}\mu(d) = \begin{cases} 
     1, & \qquad n= 1,\\
     0, & \qquad n>1.\\
\end{cases}
\end{equation}
If two arithmetic functions $f$ and $g$ (not necessarily multiplicative) are related by
$$f(n) = \sum_{d\mid n}g(d), \qquad n= 1,2,\ldots,$$
then by the M\"{o}bius inversion formula~\cite{Apo}
$$g(n) = \sum_{d\mid n}\mu(d)f\left(n/d\right),\qquad n= 1,2 ,\ldots.$$
Let $D\subset \N$ be a divisor closed set (i.e., if $d\in D$ and $d'|d$, then $d'\in D$).
If
$$f(n) = \sum_{n\mid d:\, d\in D}g(d),$$
then by the dual M\"{o}bius inversion formula~(cf. \cite[Theorem 1.2.3]{MMC}),
$$g(n) = \sum_{n\mid d:\, d\in D}\mu(d/n)f(d)$$
(assuming all series are absolutely convergent).

If $\textbf{x}$ = $(x_1,\ldots,x_k)$ and $\textbf{y}$ = $(y_1,\ldots,y_k)$ are points of $\Z^k$, and $m$ is a positive integer, we write
$\textbf{x} \equiv \textbf{y} \ (\textup{mod} \ m)$
to indicate that $x_i \equiv y_i\ (\mbox{mod}\ m)$ for $i= 1,2,\ldots,k$.
The condition that two distinct points $\textbf{x}, \textbf{y} \in \Z^k$ are mutually visible is equivalent to:
\begin{equation*}
\textbf{x} \not \equiv \textbf{y} \ (\textup{mod} \ p), \qquad p\in \mathcal{P}.
\end{equation*}
If $\textbf{x} \equiv \textbf{y} \ (\textup{mod} \ p)$, then $\textbf{x}$, $ \textbf{y}$ are $p$-\textit{invisible} from each other. (We should exclude the case $\textbf{x}= \textbf{y}$, but this is relevant only to some fixed number of points, and we will ignore it.)
Let $S$ be a finite subset of $\Z^k$.
A point $\textbf{x} \in \Z^k$ is $p$-\textit{invisible} from $S$ if it is $p$-invisible from some point of~$S$. For a square-free integer $d=p_{i_1} p_{i_2}\cdots p_{i_t}$, with $p_{i_j} \in \mathcal{P}$ for $1\leq j \leq t$, a~point $\textbf{x}$ is $d$-\textit{invisible} from $S$ if it is $p_{i_j}$-invisible from $S$ for every $1\leq j\leq t$. (Note that the points of $S$ from which $\textbf{x}$ is $p_{i_j}$-invisible may be distinct for distinct $j$-s.) Let $B$ be as in \eqref{box}. Denote by $I_d$ the set of points $\textbf{x}\in B$, that are $d$-invisible from $S$. Then
\begin{equation*}
    I_d = I_{p_{i_1}}\cap I_{p_{i_2}}\cap\ldots\cap I_{p_{i_t}}.
\end{equation*}
The function $s$ in \eqref{S(p)} is defined on $\mathcal{P}$ only. We extend it to a multiplicative function on the set of square-free integers by 
\begin{equation}
    s(d) = \prod_{p\mid d}s(p), \qquad d\in \N, \ \mu(d) \neq 0,
\end{equation}
(and $s(1)=1$).
\begin{lemma}\label{floor and cielling}
For square-free $d$,
\begin{equation*}\label{bounds of Vd}
  \prod_{i=1}^k \left \lfloor{\frac{L_i}{d}}\right \rfloor s(d)\leq |I_d| \leq  \prod_{i=1}^k\left \lceil{\frac{L_i}{d}} \right \rceil s(d),
\end{equation*}
where $\lfloor ~ \rfloor$ and $\lceil~\rceil$ are the floor and the ceiling functions, respectively.
\end{lemma}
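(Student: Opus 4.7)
The plan is to reduce the count to a congruence-counting problem and apply the Chinese Remainder Theorem coordinatewise. First, unfold the definition: a point $\x \in B$ lies in $I_d$ precisely when, for each prime $p_{i_j}$ dividing $d$, there exists some $s_j \in S$ with $\x \equiv s_j \pmod{p_{i_j}}$. Equivalently, the residue class of $\x$ in $(\Z/p_{i_j}\Z)^k$ must belong to $\pi_{p_{i_j}}(S)$, a set of size $s(p_{i_j})$ by the definition \eqref{S(p)}.

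Next, I would use the ring isomorphism $\Z/d\Z \cong \prod_j \Z/p_{i_j}\Z$, which in $k$ coordinates gives $(\Z/d\Z)^k \cong \prod_j (\Z/p_{i_j}\Z)^k$. Under this bijection, the residue classes mod $d$ for which $\x$ is $d$-invisible from $S$ correspond exactly to tuples of ``bad'' residue classes, one per prime $p_{i_j}$. Hence the number of residue classes in $(\Z/d\Z)^k$ corresponding to $d$-invisibility is
$$\prod_{j=1}^{t} s(p_{i_j}) = s(d),$$
by the multiplicative extension of $s$.

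Then the remaining task is to count, for each fixed residue class $\a \in (\Z/d\Z)^k$, the number of $\x \in B$ with $\x \equiv \a \pmod d$. Since $B$ is a product box, this count factors over coordinates: for each $1 \leq i \leq k$, the number of integers in the interval $J_i = [M_i, M_i+L_i)$ of length $L_i$ lying in a prescribed residue class mod $d$ is either $\lfloor L_i/d \rfloor$ or $\lceil L_i/d \rceil$. Multiplying across coordinates and summing over the $s(d)$ residue classes gives both the lower bound $s(d)\prod_i \lfloor L_i/d \rfloor$ and the upper bound $s(d)\prod_i \lceil L_i/d \rceil$, as claimed.

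There is no real obstacle here; the only point requiring minor care is the CRT step, where one must verify that the residue classes corresponding to $d$-invisibility truly factor as a product over primes dividing $d$ (i.e., the choice of $s_j \in S$ witnessing $p_{i_j}$-invisibility is allowed to depend on $j$, which is exactly what makes the count multiplicative rather than $|S|$-many classes mod $d$).
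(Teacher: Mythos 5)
Your proposal is correct and follows essentially the same route as the paper: identify the $d$-invisible points as a union of exactly $s(d)$ residue classes modulo $d$ via the Chinese Remainder Theorem (the paper phrases this through choices of representatives $\textbf{x}_j\in S_{p_{i_j}}$ with pairwise disjoint solution sets), and then bound the number of points of $B$ in each class between $\prod_i\lfloor L_i/d\rfloor$ and $\prod_i\lceil L_i/d\rceil$. Your coordinatewise count of each residue class within the box is a minor (and clean) variant of the paper's tiling argument; no gap.
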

\begin{proof}
For each $p \in \mathcal{P}$, let $S_p$ be a subset of size $s(p)$ of $S$, consisting of points which are mutually non-congruent modulo $p$. Let $d= p_{i_1}p_{i_2}\cdots p_{i_t}$. A point $\textbf{x} \in \Z^k$ is $d$-invisible from $S$ if
\begin{equation}\label{system of equations}
    \textbf{x} \equiv \textbf{x}_j \ (\textup{mod} \ p_{i_j}), \qquad 1\leq j \leq t,
\end{equation}
for some points $\textbf{x}_j \in S_{p_{i_j}}$, $1\leq j \leq t$. By the Chinese Remainder Theorem, for every choice of $\textbf{x}_j$-s, the system of congruences \eqref{system of equations} has a unique solution modulo $d$.
Therefore, every box of the form $[N_1,N_1+d)\times \cdots \times [N_k,N_k+d) \subset B$ for some integers $N_i$, $1\leq i \leq k$, contains exactly one point satisfying \eqref{system of equations}. Hence for each choice of $\textbf{x}_j$-s, the number of points in $B$ satisfying \eqref{system of equations} is between $\prod_{i=1}^k\lfloor{\frac{L_i}{d}}\rfloor$ and $\prod_{i=1}^k\lceil{\frac{L_i}{d}}\rceil$. Now the number of essentially distinct choices of $\textbf{x}_1, \textbf{x}_2, \ldots , \textbf{x}_t$ in \eqref{system of equations} is $s(p_{i_1})s(p_{i_2})\cdots s(p_{i_t}) = s(d)$, and the sets of solutions of congruences are pairwise disjoint for distinct $\textbf{x}_1, \textbf{x}_2, \ldots , \textbf{x}_t$. This proves the lemma.
\end{proof}

\section{Applying Selberg's Sieve Method}\label{section, selberg}
The classical Selberg sieve method is usually used to obtain estimates on the size of certain sets of positive integers, defined by some congruence conditions. Here we use this machinery to deal with our sets of visible points. We note that the idea of using the sieve for higher-dimensional sets was mentioned already by Selberg and used for other problems (see, for example, \cite{Sel, May, Pol, Vat}). 
For the sake of self-cotainedness, we present the development of the tool in this case, following~\cite{Ric, MMC}.

It follows from Lemma \ref{floor and cielling} that
\begin{equation} \label{I_d, expression with big O}
|I_d| =\frac{L_1\cdots L_{k}}{d^k}s(d) + R_d,
\end{equation}
where
\begin{equation}\label{selberg, expresion of Rd}
R_d = O\left(s(d)\cdot\left(\frac{L_1\cdots L_{k-1}}{d^{k-1}} + \frac{L_1\cdots L_{k-2}}{d^{k-2}} + \cdots +  \frac{L_1}{d} + 1\right)\right).
\end{equation}
For a positive real number $z$, set
\begin{equation*}
P(z) = \prod_{p \in \mathcal{P}:\, p< z}p.
\end{equation*}
Denote by
\begin{equation}\label{def of V(S,B,z)}
V(S,B,z)= |\{\textbf{x} \in B: (\textbf{x}-\textbf{a}, P(z))= 1,\,\,  \textbf{a} \in S\}|
\end{equation}
the number of points $\textbf{x} \in B$, such that $\textbf{x}- \textbf{a} \not \equiv \textbf{0} \ (\textup{mod} \ p)$ for all primes $p< z$ and $\textbf{a} \in S$. Here, for $\textbf{b} = (b_1,b_2,\ldots, b_k) \in \Z^k$ and $m\in \N$, we denote $(\textbf{b}, m)= \gcd(b_{1},b_{2},\ldots,b_{k}, m)$.
By inclusion-exclusion
\begin{align}\label{expe of V(S,B,z)}
    V(S,B,z) = \sum_{d\mid P(z)}\mu(d)|I_d| = \sum_{\textbf{x}\in B}\left(\sum_{d\mid P(z):\, \textbf{x}\in I_d} \mu(d)\right).
\end{align}
Let $(\lambda_d)_{d=1}^{\infty}$ be any sequence of real numbers such that $\lambda_1 =1$. We claim that
\begin{equation}\label{expression of V(S,B,z) with lambda squre}
    V(S,B,z) \leq \sum_{\textbf{x}\in B}\left(\sum_{d\mid P(z):\, \textbf{x}\in I_d} \lambda_d\right)^2.
\end{equation}
Indeed, each $\textbf{x} \in B$ contributes 1 to the left-hand side if $(\textbf{x}-\textbf{a}, P(z)) = 1$ for all $\textbf{a} \in S$ and  contributes 0 otherwise. Since $\lambda_1 =1$, the contribution of $\textbf{x}$, with $(\textbf{x}- \textbf{a}, P(z)) = 1$ for all $\textbf{a} \in S$, to the right-hand side is also 1. As the contribution of other points $\textbf{x}$ to the right-hand side is certainly non-negative, this proves \eqref{expression of V(S,B,z) with lambda squre}. 

Interchanging the order of summation in \eqref{expression of V(S,B,z) with lambda squre}, we obtain
\begin{align}\label{V(S,B,z), |I|}
    V(S,B,z) \leq \sum_{\textbf{x}\in B}\left(\sum_{d_1,d_2 \mid P(z):\ \textbf{x}\in I_{[d_1,d_2]}}\lambda_{d_1}\lambda_{d_2} \right)
    = \sum_{d_1,d_2|P(z)}\lambda_{d_1}\lambda_{d_2}|I_{[d_1,d_2]}|,
\end{align}
where $[d_1,d_2]$ denotes the least common multiple of $d_1$ and $d_2$.
By \eqref{I_d, expression with big O} and \eqref{V(S,B,z), |I|}:
\begin{align}\label{V(S,B,z) and error}
\begin{split}
    V(S,B,z) &\leq L_1\cdots L_k\sum_{d_1,d_2\mid P(z)}\lambda_{d_1}\lambda_{d_2}\frac{s([d_1,d_2])}{[d_1,d_2]^{k}} + O\left(\sum_{d_1,d_2\mid P(z)}|\lambda_{d_1}\lambda_{d_2}||R_{[d_1,d_2]}|\right)\\
     &=L_1\cdots L_k\cdot \Sigma_1 + O(\Sigma_2).
\end{split}
\end{align}

Selberg's idea was to choose $\lambda_{d}$ for $d\geq 2$ in such a way that the expression on the right-hand side of $\eqref{V(S,B,z) and error}$ will become as small as possible. To keep $\Sigma_2$ small, we take
\begin{equation}\label{lambda =0}
\lambda_{d} = 0,\  \quad \  d\geq z.
\end{equation}
The remaining $\lambda_{d}$, for $2\leq d < z$ with $d\mid P(z)$, are chosen so as to minimize the quadratic form $\Sigma_1$.
Define a multiplicative function $g$ by
\begin{equation}
    g(d) = \frac{s(d)}{d^{k}\prod_{p\mid d}\left(1- \frac{s(p)}{p^{k}}\right)}, \qquad d\in \N,\ \mu(d)\neq 0.
\end{equation}
Note that 
\begin{equation}\label{p^k/s(p)}
1+ \frac{1}{g(p)} = 1+ \frac{p^k - s(p))}{s(p)}= \frac{p^k}{s(p)}, \qquad  p\in \mathcal{P}.
\end{equation}
Since $s$ is multiplicative, for square-free $d_1$ and $d_2$ we have
\begin{align*}
    \frac{s([d_1,d_2])}{[d_1,d_2]^k} &= \frac{s(d_1)}{d_1^k}\cdot\frac{s(d_2)}{d_2^k}\cdot\frac{(d_1,d_2)^k}{s\left((d_1,d_2)\right)}\\
    &= \frac{s(d_1)}{d_1^k}\cdot\frac{s(d_2)}{d_2^k}\cdot \prod_{p\in \mathcal{P}:\, p\mid (d_1,d_2)}\left(1 + \frac{1}{g(p)}\right).
\end{align*}
Therefore
\begin{align*}
    \Sigma_1 &= \sum_{d_1,d_2<z:\, d_1,d_2\mid P(z)}\lambda_{d_1}\lambda_{d_2} \frac{s(d_1)}{d_1^k}\cdot\frac{s(d_2)}{d_2^k}\cdot\prod_{p\in \mathcal{P}:\, p\mid (d_1,d_2)}\left(1 + \frac{1}{g(p)}\right)\\
    &=\sum_{d_1,d_2<z:\, d_1,d_2\mid P(z)}\lambda_{d_1}\lambda_{d_2} \frac{s(d_1)}{d_1^k}\cdot\frac{s(d_2)}{d_2^k}\sum_{d\mid (d_1,d_2)}\frac{1}{g(d)}\\
    &= \sum_{d<z:\, d \mid P(z)}\frac{1}{g(d)}\sum_{d_1,d_2<z:\, d_1,d_2\mid P(z), d\mid (d_1,d_2)}\lambda_{d_1}\lambda_{d_2} \frac{s(d_1)}{d_1^k}\cdot\frac{s(d_2)}{d_2^k}\\
    &= \sum_{d< z:\, d\mid P(z)}\frac{1}{g(d)}\left(\sum_{l<z:\,l\mid P(z), d\mid l}\lambda_{l} \frac{s(l)}{l^k}\right)^2.
\end{align*}
Thus, under the transformation 
 \begin{equation}\label{expresion of u_d}
     u_d = \sum_{l<z:\, l\mid P(z), d\mid l}\lambda_{l} \frac{s(l)}{l^k},
 \end{equation}
the quadratic form $\Sigma_1$ is reduced to a diagonal form:
\begin{equation}\label{quad form}
    \Sigma_1 = \sum_{d<z:\, d\mid P(z)}\frac{1}{g(d)}u_d^2.
\end{equation}
By the dual M\"{o}bius inversion formula, \eqref{expresion of u_d} yields
\begin{equation}\label{first formula of lambda}
    \lambda_d\frac{s(d)}{d^k} = \sum_{l<z:\, l\mid P(z), d\mid l}\mu(l/d)u_l.
\end{equation}
Since $\lambda_d = 0$ for $d\geq z$ and $\lambda_1 =1$, we obtain
\begin{equation}\label{u_d =0}
u_l = 0, \qquad l\geq z,
\end{equation}
and 
\begin{equation} \label{u_d, lambda_1}
\sum_{l< z:\, l\mid P(z)}\mu(l)u_l = \lambda_1 = 1.
\end{equation}
Put
 \begin{equation}\label{G(z) expresion}
G(z) = \sum_{d< z}\mu^2(d)g(d)= \prod_{p\in \mathcal{P}:\, p<z}(1+ g(p)),
 \end{equation}
and
  \begin{equation} \label{G_k}
G_k(z) = \sum_{\substack{d<z:\, (d,k) =1}}\mu^2(d)g(d) = \prod_{p\in \mathcal{P}:\, p<z, (p,k)=1}(1+ g(p)), \qquad k \in \N.
\end{equation}
By \eqref{quad form}, \eqref{u_d =0} and \eqref{u_d, lambda_1}:
\begin{align*}
\sum_{d<z:\, d \mid P(z)}&\frac{1}{g(d)}\left( u_d - \frac{\mu(d)g(d)}{G(z)}\right)^2\\
&= \sum_{d<z:\, d \mid P(z)}\frac{1}{g(d)}\left( u_d^2 + \mu^2(d) g^2(d)\frac{1}{G^2(z)} - 2\mu(d)u_d g(d)\frac{1}{G(z)}\right)\\
&= \sum_{d<z:\, d \mid P(z)}\frac{1}{g(d)}u_d^2 + \sum_{d<z:\, d \mid P(z)}\mu^2(d)g(d)\frac{1}{G^2(z)}
- 2\sum_{d<z:\, d \mid P(z)}\mu(d)u_d\frac{1}{G(z)}\\
&= \Sigma_1 + \frac{1}{G(z)} - 2\cdot\frac{1}{G(z)}\\
&= \Sigma_1 - \frac{1}{G(z)}.
\end{align*}
Therefore:
\begin{equation}\label{minimal value}
    \Sigma_1 = \sum_{d<z:\, d \mid P(z)}\frac{1}{g(d)}\left( u_d - \frac{\mu(d)g(d)}{G(z)}\right)^2 + \frac{1}{G(z)}.
\end{equation}
Since $g(p)> 0$ for every $p \in \mathcal{P}$, we infer from \eqref{minimal value} that the minimal value of $\Sigma_1$ is $\frac{1}{G(z)}$, and is attained for
\begin{equation}\label{u_d second expression}
    u_d = \frac{\mu(d)g(d)}{G(z)}, \qquad d<z , \mu(d)\neq 0.
\end{equation}
By \eqref{p^k/s(p)}, \eqref{first formula of lambda} and \eqref{u_d second expression}, for square-free $d<z$:
\begin{align*}
\lambda_d &= \frac{d^k}{s(d)}\sum_{l<z:\, l\mid P(z), d\mid l}\mu(l/d)\frac{\mu(l)g(l)}{G(z)}= \frac{d^k}{s(d)}\sum_{t<\frac{z}{d}:\, t\mid P(z), (d,t) =1}\mu(t) \mu(dt)\frac{g(dt)}{G(z)}\\
&= \frac{d^k}{s(d)}\sum_{t<\frac{z}{d}:\, t\mid P(z), (d,t) =1}\mu^2(t) \mu(d)\frac{g(t)g(d)}{G(z)}= \mu(d)g(d)\frac{d^k}{s(d)}\frac{G_d(z/d)}{G(z)}\\
&= \mu(d)g(d)\prod_{p\mid d}\left(1 + \frac{1}{g(p)}\right)\frac{\prod_{p<z/d:\, (p,d)=1}(1+ g(p))}{\prod_{p<z}(1+ g(p))}\\
&= \mu(d)\prod_{p\mid d}(1 + g(p))\frac{\prod_{p<z/d:\, (p,d)=1}(1+ g(p))}{\prod_{p<z}(1+ g(p))}\\
&= \mu(d)\frac{1}{\prod_{z/d \leq p<z:\, (p,d)= 1}(1+ g(p))}.
\end{align*}
Hence $|\lambda_d|\leq |\mu(d)|=1$ for every square-free $d<z$.
It is easy to see that
$$|\{(d_1,d_2): [d_1,d_2]= d\}| = 3^{\omega(d)},\qquad \mu(d)\neq 0,$$
where $\omega(d)$ is the number of prime divisors of $d$.
Therefore, we arrive at
\begin{equation}\label{3^v(d)}
   \Sigma_2\leq \sum_{d_1,d_2\mid P(z)}|R_{[d_1,d_2]}|\leq \sum_{d < z^2:\, d\mid P(z)}3^{\omega(d)}|R_d|.
\end{equation}
By \eqref{selberg, expresion of Rd} and \eqref{3^v(d)}:
\begin{align}\label{final expression of sigma 2}
\begin{split}
\Sigma_2 &\leq O\left(\sum_{d < z^2:\, d\mid P(z)}3^{\omega(d)}s(d)\cdot\left(\frac{L_1\cdots L_{k-1}}{d^{k-1}} + \frac{L_1\cdots L_{k-2}}{d^{k-2}} + \cdots +  \frac{L_1}{d} + 1\right)\right)\\
&= O\left(\sum_{j=1}^{k-1} L_1\cdots L_{j}\cdot\prod_{p<z: p\in \mathcal{P}}\left(1+ \frac{3s(p)}{p^{j}}\right) + \sum_{d<z^2:\, d|P(z)}3^{\omega(d)}s(d) \right).\\
    \end{split}
\end{align}
For $j\geq 2$, the product $\prod_{p<z:p\in \mathcal{P}}\left(1+ \frac{3s(p)}{p^{j}}\right)$ is bounded. For $j=1$:
\begin{equation}\label{selberg, product for j=1}
\begin{split}
    \prod_{p<z: p\in \mathcal{P}}\left(1+ \frac{3s(p)}{p}\right)&\leq \prod_{p<z: p\in \mathcal{P}}\left(1+ \frac{3 r}{p}\right) \leq \prod_{p<z: p\in \mathcal{P}}\exp(3r/p)\\
    &= \exp(3r\sum_{p<z: p \in \mathcal{P}}1/p)= \exp(3r\log \log z + O(1))\\
    &=O(\log^{3r} z),
\end{split}
\end{equation}
where the second last equality follows from \cite[Theorem 4.12]{Apo}. By~\cite{G.R}, the function $\omega$ is bounded as follows:
\begin{equation}\label{hardyram}
    \omega(d) \leq \frac{2\log d}{\log \log d}, \qquad d\geq 3.
\end{equation}
Now for any $\varepsilon>0$ and sufficiently large $d$:
\begin{equation}\label{selberg, (3.r) epsilon expresion}
\begin{split}
 \sum_{d<z^2: d|P(z)}3^{\omega(d)}s(d)&\leq
    \sum_{d<z^2: d|P(z)}(3r)^{\omega(d)}
    \leq \sum_{d<z^2: d|P(z)}(3 r)^{\frac{2\log d}{\log \log d}}\\
    &\leq \sum_{d<z^2: d|P(z)} d^{\varepsilon} \leq z^{2+ \varepsilon}.
\end{split}
\end{equation}
Hence, by \eqref{final expression of sigma 2}, \eqref{selberg, product for j=1}, and \eqref{selberg, (3.r) epsilon expresion}:
\begin{equation}\label{selberg, final expresion of sigma2}
\begin{split}
    \Sigma_2 &=
    O\left(L_1\cdots L_{k-1} + L_1\cdots L_{k-2}+ \cdots + L_1 L_2 + L_1\log^{3r} z + z^{2+ \varepsilon} \right)\\
    &=O\left(L_1\cdots L_{k-1} + L_1\log^{3r} z + z^{2+ \varepsilon} \right).
    \end{split}
\end{equation}

\section{Conclusion of the Proof of Theorem \ref{main thm}}\label{section, selberge, main result}
Denote: 
\begin{equation}\label{G expresion}
G = \sum_{d\in \N}\mu^2(d)g(d).
 \end{equation}
We have
\begin{align*}
G&=\sum_{d\in \N}\mu^2(d)\frac{s(d)}{d^k\prod_{p\in \mathcal{P}: p|d}\left(1-\frac{s(p)}{p^k}\right)}\\
&=\prod_{p\in\mathcal{P}}\left(1+ \frac{s(p)}{p^k-s(p)}\right)=\prod_{p\in\mathcal{P}}\frac{p^k}{p^k-s(p)},
\end{align*}
and therefore
\begin{equation}\label{selberg, expresion of 1/G}
\frac{1}{G} = \prod_{p\in \mathcal{P}}\left(1-\frac{s(p)}{p^k}\right).
 \end{equation}
For any $\varepsilon>0$:
\begin{equation}\label{selberg, expresion of G(z)-G in term of epsilon}
\begin{split}
\frac{1}{G(z)}- \frac{1}{G} &= 
\frac{1}{\prod_{p\in \mathcal{P}:\, p<z}(1+ g(p))}- \frac{1}{\prod_{p\in \mathcal{P}}(1+ g(p))}\\
&=\frac{1}{\prod_{p\in \mathcal{P}}\left(1+ g(p)\right)}\left(\prod_{p\in \mathcal{P}:\, p\geq z}(1+ g(p)) - 1\right)\\
&= O\left(\prod_{p\in \mathcal{P}:\, p\geq z}(1+ g(p))\right)
= O\left(\sum_{d\geq z}\mu^2(d)g(d)\right)= O\left(\sum_{d\geq z}g(d)\right)\\
&=  O\left(\sum_{d\geq z}\frac{s(d)}{d^k\prod_{p|d}\left(1-\frac{s(p)}{p^k}\right)}\right)=  O\left(\sum_{d\geq z}\frac{s(d)}{d^k\prod_{p\in\mathcal{P}}\left(1-\frac{s(p)}{p^k}\right)}\right)\\
&=O\left(\sum_{d\geq z}\frac{s(d)}{d^k}\right)= O\left(\sum_{d\geq z}\frac{r^{\omega(d)}}{d^k}\right)\\
&=O\left(\sum_{d\geq z}\frac{d^{\varepsilon}}{d^k}\right)= O\left(\frac{1}{z^{k-1-\varepsilon}}\right).
\end{split}
\end{equation}
By \eqref{V(S,B,z) and error}, \eqref{minimal value}, \eqref{u_d second expression}, \eqref{selberg, final expresion of sigma2}, \eqref{selberg, expresion of 1/G}, and \eqref{selberg, expresion of G(z)-G in term of epsilon}:
\begin{equation}\label{main result, V(S,B,z) error term for k=2, k>2}
\begin{split}
V(S,B,z) &\leq \frac{L_1\cdots L_k}{G(z)} + O\left(L_1\cdots L_{k-1}+ L_1\log^{3r} z + z^{2+ \varepsilon}\right)\\
&=L_1\cdots L_k\cdot\prod_{p\in \mathcal{P}}\left(1 - \frac{s(p)}{p^k}\right) + L_1\cdots L_k \cdot O\left(\frac{1}{z^{k-1- \varepsilon}} \right)\\
&~~~+  O\left(L_1\cdots L_{k-1} + L_1\log^{3r} z + z^{2 + \varepsilon}\right)\\
&=     L_1\cdots L_k\cdot\prod_{p\in \mathcal{P}}\left(1 - \frac{s(p)}{p^k}\right)\\
&~~~+ \begin{cases}O\left(L_1 L_2 \cdot  \frac{1}{z^{1- \varepsilon}} + L_1\log^{3r} z + z^{2+ \varepsilon}\right), & k=2,\\
O\left(L_1\cdots L_k \cdot \frac{1}{z^{k- 1-\varepsilon}}  + L_1\cdots L_{k-1}+ L_1\log^{3r} z+ z^{2 +\varepsilon}\right), & k>2.\end{cases}
\end{split}
\end{equation}

For $k=2$, the candidates for the optimal choice of $z$ are those values of $z$ which make two of the three terms $E_{2,1}= \frac{L_1L_2}{z^{1-\varepsilon}}$, $E_{2,2}= L_1\log^{3r}z$, and $E_{2,3}= z^{2+\varepsilon}$ equal (up to a big oh factor).
The possibility $E_{2,1}= E_{2,2}$ implies $ z\simeq L_2^{1+\varepsilon}$, the error in \eqref{main result, V(S,B,z) error term for k=2, k>2} being $O( L_1\log^{3r}L_2 + L_2^{2+\varepsilon})$.
The possibility $E_{2,1}= E_{2,3}$ implies $z\simeq(L_1L_2)^{\frac{1}{3}}$, and the error in \eqref{main result, V(S,B,z) error term for k=2, k>2} is $O\left( L_1\log^{3r}(L_1) + (L_1L_2)^{\frac{2}{3}+ \varepsilon}\right)$.
The possibility $E_{2,2}= E_{2,3}$ implies $z\simeq L_1^{\frac{1}{2}-\varepsilon}$, and the error in \eqref{main result, V(S,B,z) error term for k=2, k>2} is $O\left( L_1^{\frac{1}{2}+ \varepsilon}L_2 + L_1\log^{3r}L_1\right)$.
A routine calculation shows that the error is in any case $O\left(\max\left\{(L_{1}L_2)^{2/3+ \varepsilon}, L_1\log^{3r}L_2\right\}\right)$.

For $k=3$ we readily verify that, by choosing $z= L_k^{3/(2(k-1))}$, we make all addends in the error term dominated by $L_1\cdots L_{k-1}$.

Altogether, 
\begin{align*}
V(S,B) &\leq 
L_1\cdots L_{k}\cdot\prod_{p}\left(1 - \frac{s(p)}{p^k}\right) + \begin{cases} O\left(\max\left\{(L_{1}L_2)^{2/3+ \varepsilon}, L_1\log^{3r}L_2\right\}\right), & k=2,\\
O(L_1\cdots L_{k-1}), & k>2,
\end{cases}
\end{align*}
which completes the proof of Theorem \ref{main thm}.

\section{Proof of Theorem \ref{visibility Omega +- theorem}}\label{occelation from orizin}
Recall that, for a positive integer $k$, the \textit{Jordan totient function} $J_k$ is defined by:
$$J_k(n)= n^k\prod_{p\in \mathcal{P}: p|n}\left(1-\frac{1}{p^k}\right), \qquad n\geq 1.$$
An alternative expression for $J_k(n)$ is:
\begin{equation}\label{jordan, formula for jordan function in terms of mobious fuction}
    J_k(n)= \sum_{d|n}\mu(d)\left(\frac{n}{d}\right)^k, \qquad n\geq 1.
\end{equation}
We agree that
$$J_0(n)= \begin{cases} 1,& \qquad n=1,\\
0, &\qquad  n\geq 2.
\end{cases}$$
Note that $J_1$ is Euler's totient function $\varphi$. (For more on $J_k$, we refer, for example, to~\cite{COHEN}.)
For $k\geq 1$ and $m\geq 1$, denote:
\begin{equation}\label{jordan, expresion of E(kim)}
    E_{k,i,m}= V_k \cap \left\{(x_1,\ldots,x_k)| \, x_i=m=\max\{ x_1,\ldots, x_k\}\right\},\qquad 1\leq i\leq k.
\end{equation}
A point $(x_1,\ldots, x_k)$ with $x_i=m$ is visible if and only if some prime divisor of $m$ divides also $x_1,\ldots, x_{i-1}, x_{i+1},\ldots, x_k$. Let $Q=\{q_1,\ldots,q_e\}$ be the set of prime divisors of $m$.
For a set $R\subseteq Q$, denote by $I(R)$ the set of points $(x_1,\ldots, x_{i-1},m, x_{i+1},\ldots, x_k)\in [1,m]^k$ such that $q\mid x_j$ for every $q\in R$ and every $j$. Clearly, 
$$|I(R)|= m^{k-1}\prod_{q\in R}\frac{1}{q^{k-1}}.$$
By inclusion-exclusion and \eqref{jordan, expresion of E(kim)}
$$E_{k,i,m}= \sum_{R\subseteq Q}(-1)^{|R|}|I_R|= m^{k-1}\sum_{R\subseteq Q}(-1)^{|R|}\prod_{q\in R}\frac{1}{q^{k-1}}= J_{k-1}(m),\qquad 1\leq i\leq k,\,  m\geq 1.$$
This clearly implies that
$$\left|\bigcap_{j=1}^r E_{k,i_{j},m}\right|= J_{k-r}(m), \qquad 1\leq i_{1}<i_{2}<\ldots< i_{r}\leq k.$$
We have
$$ V_k\cap [1,L]^k = \bigcup_{m=1}^{L}\bigcup_{i=1}^{k}E_{k,i,m},$$
where the external union is disjoint.
By the inclusion-exclusion principle

\begin{equation}\label{jordan visible lattice point sum}
\begin{split}
    \left| V_k\cap [1,L]^k\right|
    &= \sum_{m=1}^{L}\Bigg(\sum_{i=1}^{k}\left|E_{k,i,m}\right|- \sum_{1\leq i_{1}< i_{2}\leq k} \left|\bigcap_{j=1}^2 E_{k,i_{j},m}\right|\\
     &~~~~~~~~~~~~~~~~~~~~~~~~+\cdots +(-1)^{k-1}\left|\bigcap_{i=1}^k E_{k,i,m}\right|\Bigg)\\
    &=\sum_{m=1}^L\left(kJ_{k-1}(m) -\binom{k}{2}J_{k-2}(m) + \cdots + (-1)^{k-1}\binom{k}{k}J_0(m) \right).
    \end{split}
\end{equation}

\begin{lemma}\label{jordan, sum of J(k-1) with erroor}
For $k\geq 3$,
$$k\sum_{m=1}^LJ_{k-1}(m)= \frac{L^k}{\zeta(k)} +E_k(L),$$
where
$$E_k(L)= \frac{k}{2}\cdot\frac{L^{k-1}}{\zeta(k-1)}- k\sum_{d=1}^L\mu(d)\left(\frac{L}{d}\right)^{k-1}\left\{\frac{L}{d}\right\} + O(L^{k-2}\log L).$$
\end{lemma}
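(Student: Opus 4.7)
\medskip

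\noindent\textbf{Proof proposal.} The plan is a standard Möbius-Dirichlet calculation: expand $J_{k-1}$ via \eqref{jordan, formula for jordan function in terms of mobious fuction}, switch the order of summation, and apply Faulhaber's formula to the inner power sum with enough terms to produce both the main $L^k/\zeta(k)$ and the secondary $L^{k-1}/\zeta(k-1)$ contributions explicitly.

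First, using $J_{k-1}(m)=\sum_{d\mid m}\mu(d)(m/d)^{k-1}$ and writing $m=dq$, I would obtain
$$k\sum_{m=1}^{L}J_{k-1}(m)=k\sum_{d=1}^{L}\mu(d)\sum_{q=1}^{\lfloor L/d\rfloor}q^{k-1}.$$
Next I invoke the elementary asymptotic
$$\sum_{q=1}^{N}q^{k-1}=\frac{N^k}{k}+\frac{N^{k-1}}{2}+O(N^{k-2}),$$
(which follows from Faulhaber/Bernoulli), specialize to $N=\lfloor L/d\rfloor$, and expand $N=L/d-\{L/d\}$ by the binomial theorem. Keeping only the leading and first-order-in-$\{L/d\}$ pieces gives
$$\sum_{q=1}^{\lfloor L/d\rfloor}q^{k-1}=\frac{(L/d)^k}{k}-(L/d)^{k-1}\{L/d\}+\frac{(L/d)^{k-1}}{2}+O\!\bigl((L/d)^{k-2}\bigr).$$

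Multiplying by $k\mu(d)$ and summing over $d\leq L$ produces four terms. The first is $\sum_{d\leq L}\mu(d)(L/d)^k = L^k\sum_{d\leq L}\mu(d)/d^k$, which equals $L^k/\zeta(k)+O(L)$ because the tail of $\sum\mu(d)/d^k$ beyond $L$ is $O(L^{-(k-1)})$. The second is exactly the main fractional-part sum appearing in the statement. The third is $(k/2)L^{k-1}\sum_{d\leq L}\mu(d)/d^{k-1}=(k/2)L^{k-1}/\zeta(k-1)+O(L^{k-2})$ by the same tail estimate (valid since $k-1\geq 2$). The fourth error accumulates to $O\!\bigl(\sum_{d\leq L}(L/d)^{k-2}\bigr)$, which for $k=3$ is $O(L\log L)$ and for $k\geq 4$ is $O(L^{k-2})$; in either case it is absorbed into $O(L^{k-2}\log L)$. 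Collecting these four contributions yields the claimed identity.

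The only mild subtlety is making sure the stray $O(L)$ from the first term and the $O(L^{k-2})$ from the third are both dominated by the stated $O(L^{k-2}\log L)$ error; this uses $k\geq 3$, which is exactly the hypothesis. No genuinely hard step is involved — the proof is bookkeeping, and the fractional-part term is left unsimplified because it is precisely what drives the oscillation needed later for Theorem \ref{visibility Omega +- theorem}.
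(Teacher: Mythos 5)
Your proposal is correct and follows essentially the same route as the paper: Möbius expansion of $J_{k-1}$, interchange of summation, Faulhaber's formula for the inner power sum, and the binomial expansion of $\lfloor L/d\rfloor = L/d - \{L/d\}$ to extract the $L^k/\zeta(k)$ term, the $(k/2)L^{k-1}/\zeta(k-1)$ term, and the fractional-part sum, with all remaining contributions absorbed into $O(L^{k-2}\log L)$. The only cosmetic difference is that the paper keeps the exact Bernoulli-number formula and organizes the bookkeeping into sums $S_1, S_2, S_3$ (and $S_{11}, S_{12}, S_{13}$), whereas you truncate with an $O(N^{k-2})$ error up front; the estimates are identical.
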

\begin{remark}
\emph{In \cite[Lemma 3.3]{SUKUMAR} and \cite[Lemma 4.2]{TAKEDA}, very similar expressions have been obtained, and we could have used their calculations to shorten our proof. However, for self-containedness, we provide a full proof.}
\end{remark}
\begin{remark}
\emph{Erd\H{o}s and Shapiro's~\cite{ERDOS} result, mentioned in the introduction, is that the error $E_k(L)$ in the lemma changes sign infinitely often for $k=2$. However, Adhikari \cite[Theorem 3]{SUKUMAR} showed that $E_k(L)$ is eventually positive for $k\geq 3$.}    
\end{remark}
\begin{proof}[Proof of Lemma \ref{jordan, sum of J(k-1) with erroor}] 
By \eqref{jordan, formula for jordan function in terms of mobious fuction}:
\begin{equation}\label{jordan ksum(k-1)}
 \begin{split}
    k\sum_{m=1}^LJ_{k-1}(m) =
        k\sum_{m=1 }^L\sum_{d|m}\mu(d)\left(\frac{m}{d}\right)^{k-1}
        =k\sum_{d=1}^L\mu(d)\sum_{q=1}^{\left\lfloor L/d\right\rfloor} q^{k-1}.
    \end{split}
\end{equation}
By \cite{SURY},
\begin{equation}\label{jordan bernoli relation}
\sum_{q=1}^{\lfloor L/d\rfloor}q^{k-1} = \frac{1}{k}\sum_{j=0}^{k-1}\binom{k}{j}B_j\left\lfloor\frac{L}{d}\right\rfloor^{k-j},
\end{equation}
where $B_0(=1), B_1 (= 1/2), B_2, \ldots$, are the Bernoulli numbers.
From \eqref{jordan ksum(k-1)} and \eqref{jordan bernoli relation},
\begin{equation*}
    k\sum_{m=1}^LJ_{k-1}(m)= \sum_{d=1 }^L\mu(d)\sum_{j=0}^{k-1}\binom{k}{j}B_j\left\lfloor\frac{L}{d}\right\rfloor^{k-j}.
\end{equation*}
We split the inner sum on the right-hand side into three parts:
\begin{equation}\label{jordan ksum(k-1) with bernouli}
\begin{split}
    k\sum_{m=1}^LJ_{k-1}(m) &= \sum_{d=1}^L\mu(d)\left\lfloor\frac{L}{d}\right\rfloor^k + \frac{k}{2}\sum_{d=1}^L\mu(d)\left\lfloor\frac{L}{d}\right\rfloor^{k-1}\\
    &~~~+\sum_{d=1}^L\mu(d)\sum_{j=2}^{k-1}\binom{k}{j}B_j\left\lfloor\frac{L}{d}\right\rfloor^{k-j}\\
    &= S_1 + S_2 + S_3.
\end{split}
\end{equation}
First, we deal with $S_3$. Take a constant $C_k$ for which:
$$\left|\sum_{j=2}^{k-1}\binom{k}{j}B_j\left\lfloor\frac{L}{d}\right\rfloor^{k-j}\right|\leq C_k\left(\frac{L}{d}\right)^{k-2},\qquad d\leq L.$$
We have: 
\begin{equation}\label{jordan |S3|}
\begin{split}
|S_3|&\leq \sum_{d=1}^L\left|\sum_{j=2}^{k-1}\binom{k}{j}B_j \left\lfloor\frac{L}{d}\right\rfloor^{k-j}\right|
\leq C_k\sum_{d=1}^L\left(\frac{L}{d}\right)^{k-2}
= O(L^{k-2}\log L).
\end{split} 
\end{equation}
(The logarithmic factor is actually required only for $k=3$.) Now rewrite $S_1$ in the form:
\begin{equation}\label{Jordan s11,s12,s13}
\begin{split}
    S_1 &= \sum_{d=1}^L\mu(d)\left\lfloor\frac{L}{d}\right\rfloor^k
    = \sum_{d=1}^L\mu(d)\left(\frac{L}{d}- \left\{\frac{L}{d}\right\}\right)^k\\
    &= \sum_{d=1}^L\mu(d)\sum_{j=0}^{k}(-1)^j\binom{k}{j}\left(\frac{L}{d}\right)^{k-j}\left\{\frac{L}{d}\right\}^j\\
    &= \sum_{d=1}^L\mu(d)\left(\frac{L}{d}\right)^k - k\sum_{d=1}^L\mu(d)\left(\frac{L}{d}\right)^{k-1}\left\{\frac{L}{d}\right\}\\
    &~~~+ \sum_{d=1}^L\mu(d)\sum_{j=2}^{k}(-1)^j\binom{k}{j}\left(\frac{L}{d}\right)^{k-j}\left\{\frac{L}{d}\right\}^j\\
    &= S_{11}+ S_{12} + S_{13}.
\end{split}
\end{equation}
Similarly to \eqref{jordan |S3|}, we show that
\begin{equation}\label{jordan |S13|}
\begin{split}
    S_{13} =O(L^{k-2}\log L).
\end{split}
\end{equation}
For $S_{11}$:
\begin{equation}\label{jordan |s11|}
\begin{split}
    S_{11} &= \sum_{d=1}^{\infty}\mu(d)\left(\frac{L}{d}\right)^k- \sum_{d= L+1}^{\infty}\mu(d)\left(\frac{L}{d}\right)^k\\
    &= \frac{L^k}{\zeta(k)} +O\left(L^k\int_{L}^{\infty}\frac{1}{x^k}dx\right)\\
    &=\frac{L^k}{\zeta(k)} + O(L).
    \end{split}
\end{equation}
By \eqref{Jordan s11,s12,s13}, \eqref{jordan |S13|} and \eqref{jordan |s11|}:
\begin{equation}\label{jordan S1}
    S_1 = \frac{L^k}{\zeta(k)} -k\sum_{d=1}^L\mu(d)\left(\frac{L}{d}\right)^{k-1}\left\{\frac{L}{d}\right\}+ O(L^{k-2}\log L)
\end{equation}
Now $S_2$ is (except for the additional $k/2$ factor) the same as $S_1$, with $k$ replaced by $k-1$. Hence the same calculations yield:
\begin{equation}\label{jordan S2}
\begin{split}
    S_2&= \frac{k}{2}\cdot\frac{L^{k-1}}{\zeta(k-1)} -\frac{k(k-1)}{2}\sum_{d=1}^L\mu(d)\left(\frac{L}{d}\right)^{k-2}\left\{\frac{L}{d}\right\}+ O(L^{k-3}\log L)\\
    &= \frac{k}{2}\frac{L^{k-1}}{\zeta(k-1)} + O(L^{k-2}\log L).
\end{split}
\end{equation}
From \eqref{jordan ksum(k-1) with bernouli}, \eqref{jordan |S3|}, \eqref{jordan S1} and \eqref{jordan S2} we obtain
\begin{equation}\label{jordan k.sum with s1 s2 s3}
\begin{split}
    k\sum_{m=1}^L J_{k-1}(m) &= \frac{L^k}{\zeta(k)} + \frac{k}{2}\frac{L^{k-1}}{\zeta(k-1)}- k\sum_{d=1}^L\mu(d)\left(\frac{L}{d}\right)^{k-1}\left\{\frac{L}{d}\right\} + O(L^{k-2}\log L),
\end{split}
\end{equation}
as required.
\end{proof}
\begin{lemma}\emph{\cite[Lemma 4.1]{TAKEDA}}\label{jordan omega - result}
For every $k\geq 3$, there exists an $\varepsilon =\varepsilon(k)>0$, such that
$$M(L)=\sum_{d=1}^L\frac{\mu(d)}{d^{k-1}}\left\{\frac{L}{d}\right\}<-\varepsilon$$
for infinitely many positive integers $L$.
\end{lemma}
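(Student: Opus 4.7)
The plan is to exhibit an explicit infinite arithmetic progression of values of $L$ along which $M(L)$ can be computed up to a small error and seen to be bounded away from $0$ in the negative direction. The key idea is to choose $L$ so that $\{L/d\}$ attains its maximal value $(d-1)/d$ for every small divisor $d$.

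Fix a large parameter $y$, let $P(y)=\prod_{p\in\mathcal{P},\,p\leq y}p$, and consider integers of the form $L=mP(y)-1$ with $m\geq 2$. For every $d\mid P(y)$ we then have $L\equiv -1\pmod{d}$, so $\{L/d\}=(d-1)/d$. Decompose $M(L)=M_1(L)+M_2(L)$, where $M_1(L)$ sums over the divisors of $P(y)$ and $M_2(L)$ sums over the remaining square-free $d\leq L$, each of which has at least one prime factor exceeding $y$. Substituting $\{L/d\}=(d-1)/d$ into $M_1(L)$ and expanding via Euler products yields
\[
M_1(L) \;=\; \sum_{d\mid P(y)}\frac{\mu(d)}{d^{k-1}} - \sum_{d\mid P(y)}\frac{\mu(d)}{d^{k}} \;=\; \prod_{p\leq y}\Bigl(1-\tfrac{1}{p^{k-1}}\Bigr) - \prod_{p\leq y}\Bigl(1-\tfrac{1}{p^{k}}\Bigr).
\]
Since $1-p^{1-k}<1-p^{-k}$ for every prime $p$, $M_1(L)<0$ for every $y\geq 2$, and as $y\to\infty$ it converges to $1/\zeta(k-1)-1/\zeta(k)$. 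Because $\zeta$ is strictly decreasing on $(1,\infty)$ and $k-1\geq 2$, this limit is a specific negative constant, which I denote $-2\varepsilon(k)$.

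For the tail, bound $|\{L/d\}|<1$ and index each relevant $d$ by one of its prime divisors $p>y$, obtaining
\[
|M_2(L)| \;\leq\; \Bigl(\sum_{p>y}\frac{1}{p^{k-1}}\Bigr)\cdot\sum_{e:\,\mu(e)\neq 0}\frac{1}{e^{k-1}} \;=\; O\!\left(\frac{y^{2-k}}{\log y}\right),
\]
uniformly in $m$, using the prime number theorem for the sum over $p$ and the fact that $\sum_{e}|\mu(e)|/e^{k-1}=\zeta(k-1)/\zeta(2(k-1))<\infty$. Because $k\geq 3$, this tends to $0$ as $y\to\infty$. Fixing $y$ once and for all large enough that $M_1(L)<-3\varepsilon(k)/2$ and $|M_2(L)|<\varepsilon(k)/2$, we conclude that $M(L)<-\varepsilon(k)$ holds simultaneously for every $L$ in the infinite progression $\{mP(y)-1:m\geq 2\}$, which completes the argument. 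The only delicate step is the uniform tail bound on $M_2(L)$; but for $k\geq 3$ the exponent $k-1\geq 2$ makes the required estimate a routine consequence of Mertens-type arguments, so there is no serious obstacle once the Euler-product identity for $M_1(L)$ is in hand.
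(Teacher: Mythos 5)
Your proposal is correct: the decomposition $M(L)=M_1(L)+M_2(L)$ along $L\equiv -1 \pmod{P(y)}$ is exact (a squarefree $d\leq L$ either divides $P(y)$ or has a prime factor exceeding $y$, and every divisor of $P(y)$ is at most $L$ once $m\geq 2$), the Euler-product evaluation of $M_1$ and its negative limit $1/\zeta(k-1)-1/\zeta(k)$ are right, and the tail bound on $M_2$ is genuinely uniform in $m$ because you majorize it by the full infinite sum over squarefree integers with a prime factor larger than $y$; the final choice of $y$ then yields $M(L)<-\varepsilon(k)$ on the whole progression. Note, however, that the paper itself does not prove this lemma at all: it is quoted verbatim from Takeda's Lemma 4.1, so there is no internal argument to compare against. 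Your write-up therefore supplies a self-contained, elementary substitute for that citation, and it in fact proves slightly more than the stated lemma, namely that the set of admissible $L$ contains a full infinite arithmetic progression (hence has positive lower density); via the identity $\left|V_k\cap[1,L]^k\right| = \frac{L^k}{\zeta(k)} - kM(L)L^{k-1}+O(L^{k-2}\log L)$ this gives the $\Omega_{+}$ half of Theorem 2.2 on a set of positive density, which is the same flavor of strengthening the authors mention in Section 7 (there attributed, for the opposite sign, to a closer reading of P\'etermann). Two cosmetic points: the $\log y$ saving from the prime number theorem is unnecessary, since the crude bound $\sum_{p>y}p^{1-k}=O(y^{2-k})$ already suffices for $k\geq 3$; and you should state explicitly that $M_1$ depends only on $y$, not on $m$, which is what lets you fix $y$ once and let $m$ run.
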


\begin{lemma}\emph{\cite[p. 318]{PETER}} \label{jordan omega + result}
For every $k\geq 3$, there exists an $\varepsilon=\varepsilon(k)>0$, such that
\begin{equation}\label{jordan, petermann equation (29)}
m(L)=\sum_{d=1}^\infty\frac{\mu(d)}{d^{k-1}}\left\{\frac{L}{d}\right\}>\varepsilon
\end{equation}
for infinitely many positive integers $L$.
\end{lemma}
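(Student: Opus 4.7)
The plan is to reduce the claim to an oscillation result for a partial sum of a multiplicative function, then combine an analytic input with an explicit construction. First, splitting $\{L/d\} = L/d - \lfloor L/d\rfloor$ and interchanging the two absolutely convergent sums (valid since $k\geq 3$), I would obtain the exact identity
\begin{equation*}
m(L) = \frac{L}{\zeta(k)} - \sum_{n=1}^{L}\frac{J_{k-1}(n)}{n^{k-1}},
\end{equation*}
where the final step uses $\sum_{d\mid n}\mu(d)/d^{k-1}=J_{k-1}(n)/n^{k-1}$ together with a swap in the order of summation (the same identity that already appeared in the proof of Lemma~\ref{jordan, sum of J(k-1) with erroor}). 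Denoting the right-hand sum by $T(L)$, the task becomes $T(L) < L/\zeta(k)-\varepsilon$ for infinitely many $L$.

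The generating series of the summand of $T(L)$ is $H(s)=\zeta(s)/\zeta(s+k-1)$, meromorphic with a simple pole of residue $1/\zeta(k)$ at $s=1$ (which produces the main term) and whose next singularities come from the non-trivial zeros of $\zeta$ on the vertical line $\Re s = \tfrac{1}{2}-(k-1)$. Since these leading singularities past the main pole are non-real, Landau's oscillation theorem applied to an appropriate Dirichlet/Mellin transform built from $m(L)$ forces $m(L)$ to change sign infinitely often; in particular $m(L)$ cannot remain $\leq 0$ for all sufficiently large $L$. This already yields the qualitative statement that $\limsup_L m(L) > 0$.

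The main obstacle is promoting this qualitative sign change into a \emph{quantitative} lower bound $\varepsilon(k)>0$. Following Pétermann \cite{PETER}, I would realize $\varepsilon$ by working on a sparse subsequence $L_n$ of multiples of the primorial $\prod_{p\leq y_n}p$ with a carefully chosen offset; on such $L_n$, a direct Chinese Remainder Theorem estimation forces the multiplicative factor $J_{k-1}(L_n)/L_n^{k-1}$ towards its limiting value $1/\zeta(k-1)$, while an explicit tail bound on the Möbius series (restricted to $d$ with a prime factor exceeding $y_n$) controls the residual contributions to $m(L_n)$. Combining these two inputs yields an effective positive constant $\varepsilon(k)$. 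The delicate balance is between the primorial cutoff $y_n$ (large enough to pin down the local factor) and the length of the Möbius tail (short enough to keep the tail error below $\varepsilon/2$), which is precisely the point at which Pétermann's specific analytic and numerical estimates enter.
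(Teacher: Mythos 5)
The paper does not prove this lemma at all: it is imported verbatim from P\'etermann \cite[p.~318]{PETER}, so the only ``proof'' in the paper is a citation, and your sketch has to stand on its own. Your opening identity $m(L)=L/\zeta(k)-\sum_{n\le L}J_{k-1}(n)/n^{k-1}$ is correct, but the analytic step that is supposed to give even the qualitative conclusion fails as stated. The relevant transform is $H(s)/s=\zeta(s)/\bigl(s\,\zeta(s+k-1)\bigr)$, and after the pole at $s=1$ is removed its next singularity is not one of the nontrivial-zero singularities you invoke: it is the \emph{real} simple pole at $s=0$, with residue $\zeta(0)/\zeta(k-1)=-\tfrac{1}{2\zeta(k-1)}\neq 0$, i.e.\ the nonzero constant secondary term of $\sum_{n\le x}J_{k-1}(n)/n^{k-1}-x/\zeta(k)$. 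Because of this real singularity, Landau's theorem yields no contradiction from the hypothesis that $m$ is eventually $\le 0$; one must first recenter by that constant, and what one then gets is oscillation of the \emph{continuous} error around $\tfrac{1}{2\zeta(k-1)}$ in a real variable $x$.

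Even granting that, the lemma concerns integer $L$, and the transfer is lossy: writing $\tilde m(x)=x/\zeta(k)-\sum_{n\le x}J_{k-1}(n)/n^{k-1}$, on $[L,L+1)$ the function $\tilde m$ increases by exactly $1/\zeta(k)$ from its value $\tilde m(L)=m(L)$, so ``$\tilde m(x)>\tfrac{1}{2\zeta(k-1)}$ for some real $x$'' only gives $m(L)>\tfrac{1}{2\zeta(k-1)}-\tfrac{1}{\zeta(k)}$, a negative threshold for every $k\ge 3$. The same obstruction appears intrinsically: $\lim_{N\to\infty}\tfrac1N\sum_{L\le N}m(L)=\tfrac12\bigl(1/\zeta(k-1)-1/\zeta(k)\bigr)<0$, so soft averaging/oscillation arguments naturally produce the $\Omega_-$ statement (Lemma \ref{jordan omega - result}), while the positive-side bound at integers --- the actual content of this lemma --- is precisely what requires P\'etermann's finer explicit analysis. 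Your final paragraph concedes this and replaces the proof by ``following P\'etermann'' with an unverified primorial construction; as written, the proposal therefore proves nothing beyond what the paper already gets by citation, and its middle step would not survive being made precise.
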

Note that Lemma \ref{jordan omega + result} holds also if we take the sum in \eqref{jordan, petermann equation (29)} up to $L$ instead of $\infty$, namely replace $m(L)$ by $M(L)$. In fact, this follows readily from:
\begin{equation}\label{absolute value of m(L)- M(L)}
    |m(L)- M(L)|= \left|\sum_{d=L+1}^\infty\frac{\mu(d)}{d^{k-1}}\left\{\frac{L}{d}\right\}\right|\leq \sum_{d=L+1}^{\infty}\frac{1}{d^{k-1}}\xrightarrow[L\to \infty]{} 0.
\end{equation}
Therefore
\begin{equation}\label{equality of limsup and liminf of M(L0 and  m(L)}
    \liminf\limits_{L\rightarrow \infty}m(L)= \liminf\limits_{L\rightarrow \infty}M(L)\quad\mbox{and}\quad  \limsup\limits_{L\rightarrow \infty}m(L)= \limsup\limits_{L\rightarrow \infty}M(L). 
\end{equation}
It follows from \eqref{jordan visible lattice point sum} and Lemma \ref{jordan, sum of J(k-1) with erroor} that:
\begin{equation}\label{jordan, number of visible points with fractional part in error}
    \begin{split}
        \left|V_k \cap [1,L]^k\right|&= \frac{L^k}{\zeta(k)} + E_k(L) -\binom{k}{2}\cdot \frac{1}{k-1}\left(\frac{L^{k-1}}{\zeta(k-1)} + E_{k-1}(L)\right)\\
        &~~~+ \ldots + (-1)^{k-1}\binom{k}{k}\\
        &= \frac{L^k}{\zeta(k)} + \frac{k}{2}\cdot\frac{L^{k-1}}{\zeta(k-1)} - k\sum_{d=1}^L\mu(d)\left(\frac{L}{d}\right)^{k-1}\left\{\frac{L}{d}\right\}\\
        &~~~- \binom{k}{2}\frac{1}{k-1}\cdot\frac{L^{k-1}}{\zeta(k-1)} + O(L^{k-2}\log L)\\
        &= \frac{L^k}{\zeta(k)}- kM(L)L^{k-1}+ O(L^{k-2}\log L).
    \end{split}
\end{equation}
By Lemma \ref{jordan omega - result} and Lemma \ref{jordan omega + result},
$$\left | V_k \cap [1,L]^k\right| = \frac{L^k}{\zeta(k)} + \Omega_{\pm}(L^{k-1}),$$
which completes the proof of Theorem \ref{visibility Omega +- theorem}.
\begin{remark}\label{improved omega + bound}\emph{
We have claimed in Remark \ref{result, improved omega+} that
$$\left | V_k \cap [1,L]^k\right| = \frac{L^k}{\zeta(k)} +k\left(\frac{1}{\zeta(k)}- \frac{1}{\zeta(k-1)}\right)L^{k-1} + \Omega_{+}(L^{k-1}).$$
Indeed, denote 
\begin{equation}\label{expression of h_k(L)}
h_{k}(L)= \sum_{d=1}^\infty\frac{\mu(d)}{d^{k-1}}\left(\frac{1}{2}- \left\{\frac{L}{d}\right\}\right), \qquad k\geq 3.
\end{equation}
By \cite[p. 311]{PETER} (see also \cite{SUKUMAR}),
\begin{equation}\label{liminf expression from petermann p. 1}
\liminf\limits_{L\rightarrow \infty}\frac{E_{k}}{kL^{k-1}}= \liminf\limits_{L\rightarrow \infty}h_{k}(L)- \frac{1}{\zeta(k)}.
\end{equation}
By \eqref{equality of limsup and liminf of M(L0 and  m(L)} and \eqref{expression of h_k(L)}
\begin{equation}\label{expression of h(L) in form of lim sup}
\begin{split}
    \liminf\limits_{L\rightarrow \infty}h_{k}(L)&= \frac{1}{2\zeta(k-1)}- \limsup\limits_{L\rightarrow \infty}m(L)\\
    &=\frac{1}{2\zeta(k-1)}- \limsup\limits_{L\rightarrow \infty}M(L),
\end{split}
\end{equation}
which using \eqref{liminf expression from petermann p. 1}, implies that
\begin{equation}\label{liminf of E(k) expression with zeta}
\liminf\limits_{L\rightarrow \infty}\frac{E_{k}}{kL^{k-1}}= \frac{1}{2\zeta(k-1)}- \limsup\limits_{L\rightarrow \infty}M(L) - \frac{1}{\zeta(k)}.
\end{equation}
By Lemma \ref{jordan, sum of J(k-1) with erroor},
\begin{equation}\label{lim sup expression from lemma 2}
\limsup\limits_{L\rightarrow \infty}\frac{E_{k}}{kL^{k-1}}= \frac{1}{2\zeta(k-1)}- \liminf\limits_{L\rightarrow \infty}M(L).
\end{equation}
By \cite[Theorem 2]{PETER},
\begin{equation}\label{limsum m(L) and liminf m(L)}
\begin{split}
 \liminf\limits_{L\rightarrow \infty}\frac{E_{k}}{kL^{k-1}}&=-\limsup\limits_{L\rightarrow \infty}\frac{E_{k}}{kL^{k-1}}.
 \end{split}
\end{equation}
Therefore,
\begin{equation*}
\begin{split}
  \frac{1}{2\zeta(k-1)}- \limsup\limits_{L\rightarrow \infty}M(L)- \frac{1}{\zeta(k)}&= -\frac{1}{2\zeta(k-1)} + \liminf\limits_{L\rightarrow \infty}M(L),
 \end{split}
\end{equation*}
so that
\begin{equation}\label{limsum m(L) and liminf m(L) with zeta(k-1) -  zeta(k)}
\begin{split}
    \liminf\limits_{L\rightarrow \infty}M(L)&=  \frac{1}{\zeta(k-1)}- \frac{1}{\zeta(k)}- \limsup\limits_{L\rightarrow \infty}M(L).
 \end{split}
\end{equation}
From Lemma \ref{jordan omega + result}, \eqref{equality of limsup and liminf of M(L0 and  m(L)} and \eqref{limsum m(L) and liminf m(L) with zeta(k-1) -  zeta(k)}, it follows that there exist an $\varepsilon>0$ such that
$M(L)< \frac{1}{\zeta(k-1)}- \frac{1}{\zeta(k)}- \varepsilon$ for infinitely many $L$.
Hence our claim follows from \eqref{jordan, number of visible points with fractional part in error}.
}
\end{remark}

\section{Schnirelmann Densities}\label{section shini}
\begin{proof}[{Proof of Proposition \ref{Schni, proposition for visibiliy for k=2}}]
As explained before the statement of the proposition, 
the main thing we need is an effective lower bound on the difference $\frac{\left|V_2\cap[1,L]^2\right|}{L^2}-\frac{1}{\zeta(2)}$. One can probably derive such a bound from any of the papers dealing with $D(V_2)$, mentioned in Section \ref{sec, intro}. Here we show it directly, similarly to the calculations in the proof of Lemma \ref{jordan, sum of J(k-1) with erroor}.
For an arbitrary fixed $L$, denote by $\theta_d$ the fractional part of $L/d$.
From \eqref{jordan visible lattice point sum}:
\begin{equation}\label{Schni, number of visible point for k=2}
\begin{split}
\left|V_2\cap [1,L]^2\right|&= \sum_{n=1}^L2J_1(n)- \sum_{n=1}^LJ_0(n)
= 2\sum_{n=1}^L\sum_{d|n}\mu(d)\cdot\frac{n}{d}-1\\
&=2\sum_{d=1}^L\mu(d)\left(1+2+\cdots + \left\lfloor\frac{L}{d}\right\rfloor\right)-1\\
&= \sum_{d=1}^L\mu(d)\left\lfloor\frac{L}{d}\right\rfloor\left(\left\lfloor\frac{L}{d}\right\rfloor+1\right)-1\\
&= \sum_{d=1}^L\mu(d)\left(\frac{L}{d}-\theta_d\right)\left(\frac{L}{d}+1-\theta_d\right)-1\\
&= \sum_{d=1}^L\mu(d)\left(\left(\frac{L}{d}\right)^2 + (1-2\theta_d)\frac{L}{d}- \theta_d(1-\theta_d)\right)-1.
\end{split}
\end{equation}
Hence:
\begin{equation}\label{Schni, density of visible point for k=2}
\begin{split}
\frac{\left|V_2\cap [1,L]^2\right|}{L^2}&=\sum_{d=1}^L\frac{\mu(d)}{d^2}+ \frac{1}{L}\sum_{d=1}^L\frac{\mu(d)}{d}(1-2\theta_d) - \frac{1}{L^2}\sum_{d=1}^L\mu(d)\theta_d(1-\theta_d)-\frac{1}{L^2}\\
&=\sum_{d=1}^\infty\frac{\mu(d)}{d^2}-\sum_{d=L+1}^\infty\frac{\mu(d)}{d^2} + \frac{1}{L}\sum_{d=1}^L\frac{\mu(d)}{d}(1-2\theta_d)\\
&~~~- \frac{1}{L^2}\sum_{d=1}^L\mu(d)\theta_d(1-\theta_d)-\frac{1}{L^2}\\
&\geq \frac{6}{\pi^2}-\sum_{d=L+1}^\infty\frac{1}{d(d-1)}+\frac{1}{L}-\frac{1}{L}\sum_{d=2}^L\frac{1}{d}- \frac{1}{L^2}\sum_{d=1}^L\frac{1}{4}-\frac{1}{L^2}\\
&= \frac{6}{\pi^2}-\frac{1}{L}\sum_{d=2}^L\frac{1}{d}-\frac{1}{4L}-\frac{1}{L^2}\geq \frac{6}{\pi^2} - \frac{\log L}{L}, \qquad L\geq 9.
\end{split}
\end{equation}
As mentioned in Section \ref{sec, intro}, for $L_0=820$, the error is actually negative (and $L_0$ is the smallest number with this property). The error there is:
$$E= \frac{\left|V_2\cap [1,820]^2\right|}{820^2}- \frac{6}{\pi^2}= -0.000028\ldots.$$
The bound $\frac{\log L}{L}$ on the error decreases as $L$ increases. For $L= 5\cdot10^5$ we have 
$$\frac{\log L}{L}= 0.000026\ldots< |E|.$$
A simple program, run on Mathematica, yields:
$$\SD(V_2) = \min_{820\leq L\leq 5\cdot 10^5}\frac{\left|V_2\cap [1,L]^2\right|}{L^2}= \frac{\left|V_2\cap [1,1276]^2\right|}{1276^2}= 0.607877\ldots.$$
\end{proof}
\begin{proof}[{Proof of Proposition \ref{Schni, proposition for visibiliy for k=3}}]
For arbitrary fixed $L$,
\begin{equation}\label{schni, number of visible points for k=3}
\begin{split}
\left|V_3\cap[1,L]^3\right|&= \sum_{m=1}^L3J_2(m)- \sum_{m=1}^L3J_1(m) + \sum_{m=1}^LJ_0(m)\\
    &= 3\sum_{d=1}^L\mu(d)\left(1^2 + 2^2 +\cdots +\left\lfloor\frac{L}{d}\right\rfloor^2\right)\\
    &~~~-3\sum_{d=1}^L\mu(d)\left(1 + 2 +\cdots +\left\lfloor\frac{L}{d}\right\rfloor\right) +1\\
    &=\frac{1}{2}\cdot\sum_{d=1}^L\mu(d)\Bigg(\left\lfloor\frac{L}{d}\right\rfloor\left(\left\lfloor\frac{L}{d}\right\rfloor+1\right)\left(2\left\lfloor\frac{L}{d}\right\rfloor+1\right)\\
    &~~~~~~~~~~~~~~~~~~~~~~~~~~~~~~~~~~~~~-3\left\lfloor\frac{L}{d}\right\rfloor\left(\left\lfloor\frac{L}{d}\right\rfloor+1\right)\Bigg)+1\\
    &=\sum_{d=1}^L\mu(d)\left(\left(\frac{L}{d}-\theta_d\right)^3-\left(\frac{L}{d}-\theta_d\right)\right)+1.
\end{split}
\end{equation}
Hence:
\begin{equation}\label{Schni, density of visible point for k=3}
\begin{split}
\frac{\left|V_3\cap[1,L]^3\right|}{L^3}&= \sum_{d=1}^L\frac{\mu(d)}{d^3}- \frac{3}{L}\cdot\sum_{d=1}^L\frac{\mu(d)}{d^2}\theta_d
+ \frac{1}{L^2}\cdot\sum_{d=1}^L\frac{\mu(d)}{d}(3\theta_d^2-1)\\
&~~~+ \frac{1}{L^3}\cdot\sum_{d=1}^L\mu(d)(\theta_d-\theta_d^3)+ \frac{1}{L^3}\\
&\geq \frac{1}{\zeta(3)}- \sum_{d= L+1}^\infty\frac{\mu(d)}{d^3}-\frac{3}{L}\cdot\sum_{d=1}^L\frac{\mu(d)}{d^2}\theta_d- \frac{1}{L^2}\sum_{d=1}^L\frac{2}{d}- \frac{1}{L^3}\sum_{d=1}^L1\\
&\geq \frac{1}{\zeta(3)} - \frac{1}{2L^2}- \frac{3\zeta(2)}{L}- \frac{2\log L +2}{L^2}- \frac{1}{L^2}\\
&\geq \frac{1}{\zeta(3)}- \frac{3\zeta(2)}{L}- \frac{2\log L +4}{L^2}.
\end{split}
\end{equation}
By computer we find that the smallest $L$, for which the error is negative, is $L_0= 122760$.
The error there is:
$$ E=\frac{\left|V_3\cap[1,122760]^3\right|}{122760^3}- \frac{1}{\zeta(3)}= -2.95313\times10^{-9}. $$
For $L\geq 10^{10}$, the bound on the error in \eqref{Schni, density of visible point for k=3} is less than $|E|$.
Hence:
$$\SD(V_2) = \min_{122760\leq L\leq 10^{10}}\frac{\left|V_2\cap [1,L]^2\right|}{L^3}= \frac{\left|V_2\cap [1,169170]^3\right|}{169170^3}= 0.831907366\ldots.$$
\end{proof}
\begin{proof}[Proof of Proposition \ref{Schni, proposition visibility from (0,1) and (1,0)}]
Let $L$ be an arbitrary fixed positive integer. For any prime~$p$, denote by $V_p'= [1,L]^2- I_p$ the set of points of $[1,L]^2$, which are $p$-visible from both $(1,0)$ and $(0,1)$. Let $s$ be an arbitrary positive integer, to be determined later. Put $P_s= p_1\cdots p_s$.
We have 
$$A\cap[1,L]^2=\bigcap_{p\in \mathcal{P}}V_p' =\bigcap_{i=1}^sV_{p_i}'- \bigcup_{i=s+1}^{\pi(L)}I_{p_i},$$
where $\pi(L)$ is the number of primes not exceeding $L$. Therefore:
\begin{equation}\label{two points visibility, proof first expresion}
|A\cap[1,L]^2|\geq \left|\bigcap_{i=1}^sV_{p_i}'\right|- \left|\bigcup_{i=s+1}^{\pi(L)}I_{p_i}\right|\geq \sum_{d|P_s}\mu(d)|I_d|- \sum_{i=s+1}^{\pi(L)}|I_{p_i}|.
\end{equation}
By Lemma \ref{floor and cielling}, for appropriate numbers $\theta_d\in [0,1)$ we have
\begin{equation}\label{two points, expresion of A}
\begin{split}
  \sum_{d|P_s}\mu(d)|I_d|   & =\sum_{d|P_s:\,\mu(d)=1}|I_d|- \sum_{d|P_s:\,\mu(d)=-1}|I_d|\\
    &\geq \sum_{d|P_s:\,\mu(d)=1}s(d)\left\lfloor\frac{L}{d}\right\rfloor^2- \sum_{d|P_s:\,\mu(d)=-1}s(d)\left\lceil\frac{L}{d}\right\rceil^2\\
    &= \sum_{d|P_s:\,\mu(d)=1}s(d)\left(\frac{L}{d}-\theta_d\right)^2 - \sum_{d|P_s:\,\mu(d)=-1}s(d)\left(\frac{L}{d}+\theta_d\right)^2\\
    &= L^2\sum_{d|P_s:\,\mu(d)=1}s(d)\frac{1}{d^2}- 2L \sum_{d|P_s:\,\mu(d)=1}\frac{s(d)}{d}\theta_d + \sum_{d|P_s:\,\mu(d)=1}s(d)\theta_d^2\\
    &~~~-L^2\sum_{d|P_s:\,\mu(d)=-1}s(d)\frac{1}{d^2}- 2L \sum_{d|P_s:\,\mu(d)=-1}\frac{s(d)}{d}\theta_d - \sum_{d|P_s:\,\mu(d)=-1}s(d)\theta_d^2\\
    &\geq L^2\sum_{d|P_s}\mu(d)\frac{s(d)}{d^2}- 2L\sum_{d|P_s}\frac{s(d)}{d} -\sum_{d|P_s}s(d)\\
    &= L^2\prod_{i=1}^s\left(1-\frac{2}{p_i^2}\right) - 2L\prod_{i=1}^s\left(1+ \frac{2}{p_i}\right) - 3^s.
\end{split}
\end{equation}
Also:
\begin{equation}\label{two points, expresion of B}
\begin{split}
    \sum_{i=s+1}^{\pi(L)}|I_{p_i}| &\leq 2\sum_{i=s+1}^{\pi(L)}\left\lceil\frac{L}{p_i}\right\rceil^2\leq 2\sum_{i=s+1}^{\pi(L)}\left(\frac{L}{p_i}+1\right)^2\\
    &\leq 2L^2\sum_{i=s+1}^{\pi(L)}\frac{1}{p_i^2}+ 4L\sum_{i=s+1}^{\pi(L)}\frac{1}{p_i} + 2\pi(L).
\end{split}
\end{equation}
By \eqref{two points visibility, proof first expresion}, \eqref{two points, expresion of A} and \eqref{two points, expresion of B},
\begin{equation}\label{two points, expresion of A-B}
\begin{split}
    \frac{|A\cap[1,L]^2|}{L^2}&\geq \prod_{i=1}^s\left(1-\frac{2}{p_i^2}\right)- 2\sum_{i=s+1}^{\pi(L)}\frac{1}{p_i^2} - \frac{2}{L}\cdot\prod_{i=1}^s\left(1+ \frac{2}{p_i}\right)\\
    &~~~- \frac{4}{L}\cdot\sum_{i=s+1}^{\pi(L)}\frac{1}{p_i}-\frac{3^s}{L^2}- \frac{2\pi(L)}{L^2}\\
    &= \prod_{i=1}^\infty\left(1-\frac{2}{p_i^2}\right)- f(s,L),
\end{split}
\end{equation}
where for $s, L\in \mathbf{N}$
\begin{equation*}
\begin{split}
    f(s,L)&= \prod_{i=1}^\infty\left(1-\frac{2}{p_i^2}\right)-\prod_{i=1}^s\left(1-\frac{2}{p_i^2}\right)+ 2\sum_{i=s+1}^{\pi(L)}\frac{1}{p_i^2}+\frac{2}{L}\cdot\prod_{i=1}^s\left(1+ \frac{2}{p_i}\right)\\
    &~~~+ \frac{4}{L}\cdot\sum_{i=s+1}^{\pi(L)}\frac{1}{p_i}+\frac{3^s}{L^2} + \frac{2\pi(L)}{L^2}.
\end{split}
\end{equation*}
We can easily see that the smallest $L$, for which the error is negative, is $L_0=7$.
The error at $L_0$ is
$$E= \frac{|A\cap[1,7]^2|}{7^2}-\prod_{i=1}^\infty\left(1-\frac{2}{p_i^2}\right)= -0.016\ldots. $$
As $f(s,L)$ decreases as a function of $L$, we only need to find $s$ and $L_1$ such that
\begin{equation}\label{two points, f(L,s) and E}
    f(s,L)\leq |E|,\qquad L\geq L_1.
\end{equation}
We easily check that \eqref{two points, f(L,s) and E} is true for $s=10$ and $L_1=5000$.
Hence
\begin{equation*}
    \begin{split}
  \SD(A)&= \min_{7\leq L\leq 5000}\frac{|A\cap[1,L]^2|}{L^2}= \frac{|A\cap[1,7]^2|}{7^2}= 0.306\ldots\\
  &< 0.322\ldots= \prod_{p\in \mathcal{P}}\left(1-\frac{2}{p^2}\right)= D(A). \end{split}
\end{equation*}
\end{proof}

\section{An Ergodic-theoretical Viewpoint}\label{section, ergodic}
The starting point of the paper, namely Dirichlet's and Lehmer's result about the asymptotic density of the set of lattice points visible from the origin, has been stated also in terms of probability~\cite{Nym} (see also \cite{Ben,CJ,BDS}).
Now, as there is no translation-invariant probability measure on $\Z^k$ (or even $\Z$), ``probability'' is to be understood here as the asymptotic probability of a uniformly random point in the cube $[0, L-1]^k$ being visible as $L \to \infty$. In this section, we show how the usage of the term probability may be made rigorous. Moreover, we explain how the basic result may be interpreted as an ergodic theorem. We will keep the required ergodic theory minimal.

Consider the function $f: \Z^k \to \{0,1\}$, given by
\begin{equation}\label{ergo- function f}
    f(n_1,\ldots , n_k)= \begin{cases} 1, &\qquad \gcd(n_1,\ldots,n_k)= 1,\\
    0, &\qquad \gcd(n_1,\ldots,n_k)> 1,
    \end{cases}
    \qquad   (n_1,\ldots,n_k) \in \Z^k.
\end{equation}
(It makes no difference for us, but we agree that $f$ vanishes at $(0,\ldots,0)$;~see \cite{HFSB}.)
Let $R_1, \ldots , R_k$ be the basic translation operators on $\Z^k$. Namely, we take:
\begin{equation*}\label{ergo expresion of T_i}
    R_i(n_1,\ldots, n_i,\ldots,n_k) = (n_1,\ldots, n_{i}+1, \ldots,n_k), \qquad 1\leq i\leq k,\, (n_1,\ldots,n_k) \in \Z^k.
\end{equation*}
With this notation, the probability that a uniformly random lattice point $(x_1,\ldots, x_k) \in [0,L-1]^k$ is visible from $(0,\ldots,0)\in \Z^k$ may be written in the form:
\begin{equation}\label{ergo, primiteve point as ergodic sum}
    P\left((x_1,\ldots,x_k)\,\mbox{is visible from}\, (0,\ldots,0)\right) = \frac{1}{L^k}\sum_{n_1,\ldots, n_k=0}^{L-1} f\left(R_1^{n_1}\cdots R_k^{n_k}(0,\ldots,0)\right).
\end{equation}
Now the expression on the right-hand side of \eqref{ergo, primiteve point as ergodic sum} looks like an ergodic average, for which it is natural to ask about the limit. However, the underlying space is not a probability space. Thus, we embed $\Z^k$ in a compact abelian group, and extend the translations $R_i$ in such a way that the right-hand side of \eqref{ergo, primiteve point as ergodic sum} will indeed be an ergodic average.

The group we take is $G^k$, where $G= \prod_{p\in \mathcal{P}}\Z/p\Z$ is the direct product of all cyclic groups of prime order. $G$ is a compact abelian group under the product topology. We describe the Haar measure $\mu$ on $G$, namely the (unique) probability measure on $G$, invariant under all translations of the group. (We refer to \cite{HPHN} for more details on the Haar measure.)
The measure of a cylindrical set $C= A_{1}\times A_{2}\times \cdots \times A_{n}\times \prod_{i \geq n+1}\Z/p_{i}\Z$, where $p_i$ is the $i$th prime and $A_j \subseteq \Z/p_j\Z$ for $1\leq j\leq n$, is:
\begin{equation}\label{ergo measure for cylinder}
    \mu(C)= \prod_{j=1}^{n}\frac{|A_j|}{p_j}.
\end{equation}
We mention that \eqref{ergo measure for cylinder} uniquely determines $\mu$ on the whole Borel field $\mathcal{B}(G)$ of~$G$~(see\cite{Kakut}). Moreover, $\mu$ is the Haar measure on $G$.
The Haar measure on $G^k$ is the $k$-fold product $\mu ^k= \mu \times \cdots \times \mu$, determined by~(see \cite[Sec. 5.3]{ETF})
$$\mu^k(B_1\times \cdots \times B_k) = \mu(B_1)\cdots \mu(B_k), \qquad B_1,\ldots , B_k \in \mathcal{B}(G).$$
Define a monomorphism $i_{\Z}: \Z\hookrightarrow G$ by
\begin{equation*}\label{ergo map i}
    i_{\Z}:n \to ( n  \ \textup{mod} \ p_1, n \ \textup{mod} \ p_2, \ldots ), \qquad n\in \Z.
\end{equation*}
The transformation $R_{\Z}: \Z\to \Z$, given by $R_{\Z}(n)= n+1$ for $n\in \Z$, may be extended to a transformation $R_{G}: G \to G$ by 
$$R_{G}: (x_{1}, x_{2},\ldots)= (x_{1}+1, x_{2}+1,\ldots),$$
and the diagram
\begin{equation}\label{commutative diagram 1}
 \begin{tikzcd}
\Z \arrow{r}{i_{\Z}} \arrow[swap]{d}{R_{\Z}} & G \arrow{d}{R_{G}} \\%
\Z \arrow{r}{i_\Z}& G
\end{tikzcd}
\end{equation}
is commutative. 

Now we generalize the concept of visibility to $G$ and $G^k$.
Two points $\textbf{x}= (x_1,x_2,\ldots)$ and $\textbf{y}=(y_1,y_2,\ldots)$ in $G$ are \textit{mutually visible} if $x_i\neq y_i$ for each $i$; otherwise, they are \textit{mutually invisible}. The probability that a $\mu$-random point $\textbf{x}\in G$ is visible from $\textbf{0}=(0,0,\ldots)\in G$ is the measure of the set $\prod_{p\in \mathcal{P}}(\Z/p\Z)^{*}$,
$$P(\textbf{x}\, \mbox{is visible from }\, \textbf{0}) = \prod_{p\in \mathcal{P}}\left(1-\frac{1}{p}\right)=0.$$
Two points $\overrightarrow{\textbf{x}}=(\textbf{x}_1,\ldots, \textbf{x}_k )$ and $ \overrightarrow{\textbf{y}}=(\textbf{y}_1,\ldots, \textbf{y}_k )$ in $G^k$, where $\textbf{x}_j = (x_{j1},x_{j2},\ldots)$ and $\textbf{y}_j = (y_{j1},y_{j2},\ldots)$ for $1\leq j\leq k$, are {\it mutually invisible} if $x_{ji}= y_{ji}$ for all $1\leq j\leq k$ and for some $i\in \N$; otherwise, they are {\it mutually visible}.
The probability that a $\mu^k$-random point $\overrightarrow{\textbf{x}} \in G^k$ is visible from $\overrightarrow{\textbf{0}}=(\textbf{0},\ldots,\textbf{0})$ is $\prod_{i=1}^{\infty}\left(1- \frac{1}{p_i^k}\right)=\frac{1}{\zeta(k)}$ for $k \geq 2$.
Let us denote 

\[
\textbf{e}_j=(\textbf{0},\ldots,\textbf{0},\here{\textbf{1}}{fromhere},\textbf{0},\ldots,\textbf{0}), \qquad 1\leq j\leq k,
\]
\begin{tikzpicture}[remember picture, overlay]
\node[font=\scriptsize, below right=12pt of fromhere] (tohere) {$j^{th}$-entry};
\draw[Stealth-] ([yshift=-4pt]fromhere.south) |- (tohere);
\end{tikzpicture}

\noindent where $\textbf{1}= (1,1,\ldots)$. Define the rotations
$R_{\textbf{e}_j} : G^k \to G^k$ (extending the translations $R_1,\ldots,R_k$ to $G^k$) by
$$R_{\textbf{e}_j}(\overrightarrow{\textbf{x}}) = \overrightarrow{\textbf{x}} + \textbf{e}_j,\qquad \overrightarrow{\textbf{x}} \in G^k, \, 1\leq j\leq k.$$
Analogously to \eqref{ergo, primiteve point as ergodic sum}, we may be interested in the behavior of the averages
\begin{equation}\label{ergo, visible point as ergodic sum}
     \frac{1}{L^k}\sum_{n_1,\ldots, n_k=0}^{L-1} f_{G^k}(R_{\textbf{e}_1}^{n_1}\cdots R_{\textbf{e}_k}^{n_k}\overrightarrow{\textbf{x}})
\end{equation}
as $L\to \infty$, where
\begin{equation*}\label{ergo defination of G^k}
    f_{G^k}(\overrightarrow{\textbf{x}})= \begin{cases} 1, & \qquad \overrightarrow{\textbf{x}}\, \mbox{ is visible from}\, \overrightarrow{\textbf{0}},\\
    0, & \qquad \mbox{otherwise}.
    \end{cases}
\end{equation*}

We recall several other basic definitions and results. (See~\cite{LIY} for more details.)
Let $X$ be a compact metric space, $\mathcal{A}$ its Borel $\sigma$-field, and $\nu$ a probability measure on $(X,\mathcal{A})$. Given commuting homeomorphisms $T_1, \ldots,T_k$ of $X$, we consider the $\Z^k$-action $T$ on $X$ defined by:
$$T^{\bar{n}}(x) = T_{1}^{n_1}\cdots T_{k}^{n_k}(x),\qquad x\in X, \, \bar{n} = (n_1,\ldots, n_k) \in \Z^k.$$
The measure $\nu$ is $T$-\textit{invariant} if 
$$\nu(T^{\bar{n}}(E))= \nu(E), \qquad E\in \mathcal{A},\, \bar{n}= (n_1,\ldots ,n_k)\in \Z^k.$$
A set $E\in \mathcal{A}$ is $T$-\textit{invariant} if $T^{\bar{n}}(E)= E$ for all $\bar{n}\in \Z^k$.
\begin{definition}\begin{enumerate}
 \item \emph{ A $T$-\textit{measure-preserving system} is a quadruple $(X,\mathcal{A}, \nu,T)$, where $\nu$ is $T$-invariant probability measure.}

\item\emph{ The system is \textit{ergodic} if, for every $T$-invariant set $E\in \mathcal{A}$, either $\nu(E) = 0 $ or $\nu(E)=1$}
\end{enumerate}
\end{definition}

\begin{definition}\emph{
An action $T$ is \textit{uniquely ergodic} if it admits a unique invariant measure on $X$.}
\end{definition}
Define a $\Z^k$-action $R$ on $G^k$ by: 
$$R^{\bar{n}} \overrightarrow{\textbf{x}}= R_{\textbf{e}_1}^{n_1} \cdots R_{\textbf{e}_k}^{n_k}\overrightarrow{\textbf{x}}, \qquad \bar{n}\in \Z^k, \, \overrightarrow{\textbf{x}}\in G^k.$$
Since the $R_{\textbf{e}_j}$-s are rotations of $G^k$, the action preserves $\mu^k$.
Note that the subgroup of $G^k$ generated by $\textbf{e}_j$, $1\leq j \leq k$, which is just the image of $\Z^k$ under the monomorphism~$i_{\Z}^k:\Z^k \to G^k$, is dense in $G^k$. Therefore, $R$ is invariant under all rotations of $G^k$, and hence is uniquely~ergodic by the uniqueness of the Haar measure.
Now the ergodicity of the action $R$ implies that (see \cite{LIY}):
\begin{equation}\label{ergo sum for L1 function}
    \frac{1}{L^k}\sum_{n_1,\ldots ,n_k =0}^{L-1}g\left(R^{\bar{n}} \overrightarrow{\textbf{x}}\right) \xrightarrow[L\to \infty]{\text{a.e.}} \int_{G^k} gd \mu^k,\qquad g\in L^1(G^k,\mathcal{B}^k(G),\mu^k).
\end{equation}
For continuous functions $g$, the convergence holds everywhere and is uniform~\cite[Proposition 2.8]{LIY}.

However, our function $f_{G^k}$ is not continuous. Indeed, take, for example,
$\overrightarrow{\textbf{x}}= (\textbf{x}_1,\ldots,\textbf{x}_k)$, $\overrightarrow{\textbf{y}}^n= (\textbf{y}_1^n,\ldots,\textbf{y}_k^n)$, with $\textbf{x}_j= \textbf{1}$ for $1\leq j\leq k$, and 
$$ y_{j i}^n = \begin{cases} 1,\qquad& 1\leq j\leq k, \, i\neq n\\
0,\qquad & 1\leq j \leq k,\, i=n.
\end{cases}
$$
Then $\overrightarrow{\textbf{y}}^n \xrightarrow[n\to \infty]{} \overrightarrow{\textbf{x}}$, yet $f_{G^k}(\overrightarrow{\textbf{y}}^n) =0$ for each $n$, while $f_{G^k}(\overrightarrow{\textbf{x}})= 1$. Thus, whereas the convergence in \eqref{ergo sum for L1 function} is guaranteed almost everywhere, it may not hold everywhere. It does hold for the point $\overrightarrow{\textbf{0}}$; this is just Dirichlet-Lehmer's result. However, there exist points for which the convergence in \eqref{ergo sum for L1 function} does not hold. The following example presents an ``extreme'' such point.
\begin{example}\emph{Let $M$ be any bijection from $\mathcal{P}$ to $(\N\cup\{0\})^k$,
and let $\overrightarrow{\textbf{x}}\in G^k$ be given by
$$\begin{pmatrix} x_{1i} \\ \vdots \\ x_{ki} \end{pmatrix}= -M(p_i) \, \textup{mod} \, p_i,\qquad i\in \N.$$
We claim that $f_{G^k}(R^{\bar{n}}{\overrightarrow{\textbf{x}}})=0$ for every $\bar{n} \in (\N\cup\{0\})^k$. In fact, given any $\bar{n}$, take the $i$ for which $M(p_i)= \bar{n}$. 
Then the $i$-th coordinate of $R^{\bar{n}}(\overrightarrow{\textbf{x}})$ is:
$$(-M(p_i) + \bar{n})\, \textup{mod} \, p_i\,=  \begin{pmatrix} 0 \\ \vdots \\ 0 \end{pmatrix}.$$
It follows that the left-hand side of \eqref{ergo sum for L1 function} is 0 for every $L$, while the right-hand side is $1/\zeta(k)$. Hence \eqref{ergo sum for L1 function} fails for $f_{G^k}$ and $\overrightarrow{\textbf{x}}$.}
\end{example}

We mention that, while in general there is no necessary relation between the ergodic averages and the almost everywhere limit at points where \eqref{ergo sum for L1 function} fails, in our case we have:
\begin{equation}\label{ergo sum is less than 1/zeta(k)}
   \limsup_{L\to \infty} \frac{1}{L^k}\sum_{n_1,\ldots ,n_k =0}^{L-1}f_{G^k}\left(R^{\bar{n}} \overrightarrow{\textbf{x}}\right) \leq \frac{1}{\zeta(k)},  \qquad \overrightarrow{\textbf{x}} \in G^k.
\end{equation}
In fact, define a sequence $\{f_{n,G^k}\}_{n\in \N}$ of continuous functions $f_{n,G^k}: G^k \to \R$ by:
\begin{equation*}
    f_{n,G^k}(\overrightarrow{\textbf{x}})= 
    \begin{cases}
    1, & \qquad  \textup{for all} \, 1\leq i\leq n,\,\textup{we have}\, x_{ji}\neq 0\, \textup{for some}\, j=j(i),\\
    0,& \qquad \textup{otherwise}.
    \end{cases}
\end{equation*}
Clearly, $f_{G^k}(\overrightarrow{\textbf{x}})\leq f_{n,G^k}(\overrightarrow{\textbf{x}})$ for every $n$ and $\overrightarrow{\textbf{x}}$. Hence, for all $\overrightarrow{\textbf{x}} \in G^k$ and $n\in \N$,
\begin{equation}\label{ergo limsum of sum}
\begin{split}
   \limsup_{L\to \infty} \frac{1}{L^k}\sum_{n_1,\ldots ,n_k =0}^{L-1}f_{G^k}\left(R^{\bar{n}} \overrightarrow{\textbf{x}}\right) &\leq\limsup_{L\to \infty}\frac{1}{L^k}\sum_{n_1,\ldots ,n_k =0}^{L-1}f_{n,G^k}\left(R^{\bar{n}} \overrightarrow{\textbf{x}}\right)\\
   &= \prod_{i=1}^n\left(1-1/p_i^k\right),
 \end{split}
\end{equation}
where the equality follows from the fact that $R$ is uniquely ergodic and the functions  $f_{n,G^k}$ are continuous. (In fact, the equality is trivial by periodicity.) Since the left-hand side of \eqref{ergo limsum of sum} is independent of $n$, this proves \eqref{ergo sum is less than 1/zeta(k)}.

We have seen that our action $R$ is uniquely ergodic, but $f_{G^k}$ is discontinuous. This explains why we have arbitrarily large cubes in $\Z^k$, for which the proportion of points visible from $\overrightarrow{\textbf{0}}$ is very far from the limit. In fact, as we have seen in Section \ref{sec, intro}, the proportion may well be $0$. The discontinuity of $f_{G^k}$ is due to its being sensitive to arbitrarily large primes. Consider a point $\overrightarrow{\textbf{x}} \in G^k$ that passes many ``invisibility tests'', namely satisfies $x_{ji}\neq0$ for all $1\leq i\leq n$ and some $j= j(i)$. If it satisfies this property for all primes, then $f_{G^k}(\overrightarrow{\textbf{x}})=1$; if it fails for but one prime, then $f_{G^k}(\overrightarrow{\textbf{x}})=0$.

Interestingly, if we replaced the binary visible-invisible ladder by a more refined measure of visibility, we would be able to enjoy the unique ergodicity property. Suppose, intuitively, that $f:G^k\to [0,1]$ still satisfies $f(\overrightarrow{\textbf{x}})=1$ for a visible point $\overrightarrow{\textbf{x}}$, but assumes a relatively large (small, respectively) value if $\overrightarrow{\textbf{x}}$ violates the visibility condition for few (many, respectively) primes.
Specifically, consider the functions $\Phi_{s,G^k}:G^k\to [0,1]$ for real $s>0$:
$$\Phi_{s,G^k}(\overrightarrow{\textbf{x}})= \prod_{i:\, x_{1i}=\cdots= x_{ki}= 0}\left(1-\frac{1}{p_i^s}\right),\qquad \overrightarrow{\textbf{x}}=(\textbf{x}_1,\ldots,\textbf{x}_k )\in G^k.$$
By \eqref{ergo sum for L1 function}:
\begin{equation}\label{ergo, phi(s,Gk)}
\begin{split}
    \frac{1}{L^k}\sum_{n_1,\ldots, n_k=0}^{L-1}\Phi_{s,G^k}(R^{\bar{n}}\overrightarrow{\textbf{x}}) &\xrightarrow[L\to \infty]{\textup{a.e.}}\int_{G^k}\Phi_{s,G^k}d\mu^k.\\
     \end{split}
\end{equation}
$G^k$ is the product of the finite groups $(\Z/p_i \Z)^k$, and the measure $\mu^k$ is the product of the Haar measures $\mu_i^k$ on these groups. The function $\Phi_{s,G^k}$ may be written as a product of simple functions $\Phi_{s,(\Z/p_i \Z)^k}$ on these groups. Hence we can calculate the integral explicitly to obtain:
\begin{equation}
\begin{split}
     \frac{1}{L^k}\sum_{n_1,\ldots, n_k=0}^{L-1}\Phi_{s,G^k}(R^{\bar{n}}\overrightarrow{\textbf{x}}) &\xrightarrow[L\to \infty]{\textup{a.e.}}\prod_{i=1}^{\infty}\int_{(\Z/p_i \Z)^k}\Phi_{s,(\Z/p_i \Z)^k}d \mu_i^k\\
     &=\prod_{i=1}^{\infty}\left[\left(1-\frac{1}{p_i^k}\right)\cdot 1 +\frac{1}{p_i^k}\left(1- \frac{1}{p_i^s}\right)\right]\\
     &= \prod_{i=1}^{\infty}\left(1-\frac{1}{p_i^{s+k}}\right)= \frac{1}{\zeta(s+k)}.
    \end{split}
\end{equation}
One checks easily that $\Phi_{s,G^k}$ is continuous for $s>1$. Hence, for such $s$, the convergence in \eqref{ergo, phi(s,Gk)} is everywhere and holds uniformly.
Note that, for $0<s\leq 1$, the function $\Phi_{s,G^k}$ is discontinuous. Indeed, take, say, $\overrightarrow{\textbf{x}}= (\textbf{x}_1,\ldots,\textbf{x}_k)$ with $\textbf{x}_j= \textbf{1}$ and $\overrightarrow{\textbf{y}}^n= (\textbf{y}_1^n,\ldots,\textbf{y}_k^n)$, defined by:
$$ y_{j i}^n = \begin{cases} 1,\qquad& 1\leq j\leq k, \, 1\leq i\leq n,\\
0,\qquad & 1\leq j \leq k,\, i>n.
\end{cases}
$$
Then $\overrightarrow{\textbf{y}}^n \xrightarrow[n\to \infty]{} \overrightarrow{\textbf{x}}$, yet $\Phi_{s,G^k}(\overrightarrow{\textbf{y}}^n) =0$ for each $n$, while $\Phi_{s,G^k}(\overrightarrow{\textbf{x}})= 1$. In the case $k=1$, the limit in \eqref{ergo, phi(s,Gk)} has been calculated in \cite{COHEN} and it was shown that the ergodic sum behaves like:
\begin{equation}\label{1 one ergodic average with error}
 \frac{1}{L}\sum_{n=0}^{L-1}\Phi_{s,G}(R_{G}^n\textbf{x}) = \frac{1}{\zeta(s+1)} +O\left(\frac{1}{L}\right), \qquad s>1.   
\end{equation}
The discussion above implies a multi-dimensional version of this formula. Moreover, due to the unique ergodicity, the averaging may start anywhere. Note, though, that our method does not yield an explicit error term as in \eqref{1 one ergodic average with error}.
\begin{prop}\label{ergodic average proposition}
Let $s>1$ and
$$B_i= [M_{1,i},M_{1,i}+ L_{1,i})\times \cdots \times[M_{k,i},M_{k,i}+ L_{k,i}), \qquad  i \in \N,$$
where $M_{1,i}\ldots M_{k,i}\in \N\cup \{0\}$ and $L_{j,i} \xrightarrow[i\to \infty]{} \infty$ for $1\leq j\leq k$. Then
\begin{equation*}
\begin{split}
    \frac{1}{|B_i|}\sum_{\bar{n}\in B_i}\Phi_{s,G^k}(R^{\bar{n}}\overrightarrow{\textbf{\em{x}}}) &\xrightarrow[i\to \infty]{}\int_{G^k}\Phi_{s,G^k}d\mu
    = \frac{1}{\zeta(s+k)}
    \end{split}
\end{equation*}
uniformly everywhere.
\end{prop}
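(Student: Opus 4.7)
The plan is to leverage two facts already in hand: for $s>1$ the function $\Phi_{s,G^k}$ is continuous on the compact metrizable group $G^k$, and the $\Z^k$-action $R$ is uniquely ergodic with respect to Haar measure $\mu^k$, whose integral of $\Phi_{s,G^k}$ equals $1/\zeta(s+k)$. The proposition is thus a Følner-box uniform ergodic theorem, which I would prove directly, exploiting the product structure of $G^k$ to average cylinder functions \emph{exactly} over periods rather than invoking a black-box theorem.

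First I would reduce to boxes based at the origin. Because $R$ is a group action, $R^{(M_{1,i},\ldots,M_{k,i})+\bar n'}\overrightarrow{\textbf{x}} = R^{\bar n'}\bigl(R^{(M_{1,i},\ldots,M_{k,i})}\overrightarrow{\textbf{x}}\bigr)$, so the average over $B_i$ at $\overrightarrow{\textbf{x}}$ equals the average over the origin-based box $B_i'=[0,L_{1,i})\times\cdots\times[0,L_{k,i})$ at the shifted point $R^{(M_{1,i},\ldots,M_{k,i})}\overrightarrow{\textbf{x}}$; uniform convergence over all points for $B_i'$ therefore suffices. Next, for each $N\in\N$ introduce the cylinder truncation
$$\Phi^{(N)}_{s,G^k}(\overrightarrow{\textbf{x}}) = \prod_{\substack{1\le i\le N\\ x_{1i}=\cdots=x_{ki}=0}}\left(1-\frac{1}{p_i^s}\right),$$
which depends only on the first $N$ prime coordinates and so is locally constant, hence continuous. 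Pointwise $\Phi_{s,G^k} = \Phi^{(N)}_{s,G^k}\cdot\rho_N$ with $\rho_N\in\bigl[\prod_{i>N}(1-p_i^{-s}),\,1\bigr]$; since $\sum p_i^{-s}<\infty$ for $s>1$, the lower bound tends to $1$, giving $\|\Phi_{s,G^k}-\Phi^{(N)}_{s,G^k}\|_\infty\to 0$ and, by dominated convergence, $\int\Phi^{(N)}_{s,G^k}\,d\mu^k\to 1/\zeta(s+k)$.

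The core of the argument is exact averaging of $\Phi^{(N)}_{s,G^k}$. Setting $P_N=p_1\cdots p_N$, the map $\bar n\mapsto\Phi^{(N)}_{s,G^k}(R^{\bar n}\overrightarrow{\textbf{x}})$ is periodic on $\Z^k$ with period $P_N$ in each coordinate, and by the Chinese Remainder Theorem each complete $[0,P_N)^k$-block averages to $\int\Phi^{(N)}_{s,G^k}\,d\mu^k$ independently of the basepoint (the $\Z^k$-action on $(\Z/P_N\Z)^k$ is free and transitive, so $[0,P_N)^k$ is a fundamental domain). Writing $L_{j,i}=q_{j,i}P_N+r_{j,i}$ with $0\le r_{j,i}<P_N$, the box $B_i'$ decomposes into a bulk of complete period-cubes plus a boundary of volume at most $k\,P_N\,|B_i'|/\min_j L_{j,i}$, so, uniformly in $\overrightarrow{\textbf{x}}$,
$$\left|\frac{1}{|B_i'|}\sum_{\bar n\in B_i'}\Phi^{(N)}_{s,G^k}(R^{\bar n}\overrightarrow{\textbf{x}}) - \int_{G^k}\Phi^{(N)}_{s,G^k}\,d\mu^k\right| = O\!\left(\frac{k P_N}{\min_j L_{j,i}}\right).$$

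To finish, given $\eps>0$ choose $N$ so that both $\|\Phi_{s,G^k}-\Phi^{(N)}_{s,G^k}\|_\infty$ and $|\int\Phi^{(N)}_{s,G^k}\,d\mu^k-1/\zeta(s+k)|$ are below $\eps/3$, then choose $i_0$ so large that the right-hand side above is below $\eps/3$ for $i\ge i_0$. Three triangle inequalities yield the proposition, with no dependence on $\overrightarrow{\textbf{x}}$ or on the shift $(M_{1,i},\ldots,M_{k,i})$, so the convergence is uniform everywhere. The step that restricts the range of validity is the cylinder approximation, which genuinely requires $s>1$ via $\sum p_i^{-s}<\infty$; for $s\le 1$ the function $\Phi_{s,G^k}$ is discontinuous (as shown just before the proposition), and no uniform limit can hold.
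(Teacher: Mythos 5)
Your proof is correct, and it takes a genuinely different route from the paper. The paper does not argue the proposition from scratch: it deduces it from the preceding discussion, namely (i) the action $R$ is uniquely ergodic because the image of $\Z^k$ is dense in $G^k$ and Haar measure is the unique rotation-invariant measure, (ii) $\Phi_{s,G^k}$ is continuous for $s>1$, (iii) the integral is computed as the product $\prod_i\bigl[(1-p_i^{-k})+p_i^{-k}(1-p_i^{-s})\bigr]=1/\zeta(s+k)$, and then it invokes the standard theorem (cited from Lima) that for uniquely ergodic actions the averages of continuous functions converge uniformly, adding only the remark that unique ergodicity lets the averaging start anywhere. You instead give a self-contained quantitative argument: reduce to origin-based boxes by shifting the basepoint, approximate $\Phi_{s,G^k}$ uniformly by the cylinder truncations $\Phi^{(N)}_{s,G^k}$ (this is where $s>1$ enters, via $\prod_{i>N}(1-p_i^{-s})\to1$), average the truncation exactly over complete $P_N$-period cubes using the Chinese Remainder Theorem, and control the incomplete boundary slabs by $O\bigl(kP_N/\min_j L_{j,i}\bigr)$ before a three-$\varepsilon$ argument. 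What the paper's route buys is brevity, at the cost of leaving the extension from cubes to arbitrary F{\o}lner boxes as an assertion resting on unique ergodicity; what your route buys is an explicit, elementary proof of exactly that extension (in effect reproving unique ergodicity for this rotation action through equidistribution modulo $P_N$), together with an explicit error term after truncation. Your mechanism is in fact the one the paper itself hints at when it notes that the corresponding equality for cylinder functions ``is trivial by periodicity,'' so the two arguments are close in spirit even though yours replaces the black-box citation by a direct computation.
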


\section{``Statistical'' Insights}\label{section, statistical}
In this section, we provide the results of several relevant computations performed using Mathematica. The first question we studied concerns the sign of the error $\frac{|V_k\cap [1,L|^k|}{L^k} - \frac{1}{\zeta(k)}$. As mentioned in the Section \ref{sec, intro}, the smallest integer, for which the error is negative for dimension $k=2$, is $L_0=820$. Thus, let $L$ be an \textit{exceptional} integer (for dimension $k$) if $\frac{|V_k\cap [1,L]^k|}{L^k} - \frac{1}{\zeta(k)}<0$. It turns out that, in the range $[1,10^4]$, there are but $18$ exceptional integer:
\begin{equation*}
    \begin{split}
        &820,1276,1422,1926,2080,2640,3186,3250,4446,\\
        4&720,4930,5370,6006,6546,7386,7476,9066,9276.
    \end{split}
\end{equation*}
The frequency of exceptional integers seems to be pretty fixed as we continue. There are 18237167 exceptional integers in the range $[1,10^{10}]$. Counting them in each of the 10 sub-intervals $(j\cdot10^9,(j+1)\cdot10^9]$, $0\leq j\leq 9$, we see that in each sub-interval, there are about one-tenth of them. In fact, the minimum is 1822954, attained at $(6\cdot10^9,7\cdot10^9]$ and the maximum is 1824549, attained at $(9\cdot10^9,10^{10}]$.

Another (hardly surprising) thing one may observe is that exceptional integers tend to have small remainders modulo small primes. In fact, all exceptional integers up to $5\cdot10^5$ are (i) $0\,\textup{mod}\, 2$, (ii) $0,1\,\textup{mod}\, 3$, (iii) $0,1,2\,\textup{mod}\, 5$, and (iv) $0,1,2,3,5\, \textup{mod}\, 7$.

For $k=3$, there are even (much) less exceptional integers. In the range $[1,10^6]$, there are six of them:
$$122760,169170,446370,689130,8134450,912990.$$
Similarly to the case $k=2$, the frequency of exceptional integers seems to be pretty fixed (but much lower) as we continue. 
There are 40815 exceptional integers in the range $[1,10^{10}]$. Counting them in each of the 10 sub-intervals $(j\cdot10^9,(j+1)\cdot10^9]$, $0\leq j\leq 9$, we see that the minimum is 4033, attained at $(4\cdot10^9,5\cdot10^9]$, and the maximum is 4123, attained at $(9\cdot10^9,10^{10}]$. We mention that the fact that for $k\geq 3$, the lower density (cf.\cite[p. 72]{FURSTEN}) of the set of exceptional integers is not 0 follows by getting into the proof of Petermann's results~\cite[p. 318]{PETER}, on which we have relied, in the proof of Theorem~\ref{visibility Omega +- theorem}. In fact, the set of exceptional integers contains an infinite arithmetic progression.

We have also looked at the error term for some sets $S$ of cardinality greater than~1. Interestingly, at least in the cases, we have checked, there is no strong tendency for the error to be positive (or negative either). We have considered the following examples.

\begin{example}\label{example, (0,1),(1,0)}
\emph{Let $S=\{(1,0),(0,1)\}\subseteq \mathbf{N}^2$ (as in Proposition \ref{Schni, proposition visibility from (0,1) and (1,0)}). 
In the range $[1,10^3]$ there are about 311 integers $L$ for which 
$$\frac{|V(S)\cap [1,L|^2|}{L^2}< \prod_{p\in \mathcal{P}}\left(1- \frac{2}{p^2}\right)= D(V(S)).$$
(The ``about'' in the last sentence is due to the fact that we have not bothered to check the status of integers $L$ for which $\frac{|V(S)\cap [1, L|^2|}{L^2}$ and the density $\prod_{p\in \mathcal{P}}\left(1- \frac{2}{p^2}\right)$ are very close.)
}
\end{example}

\begin{example}\label{example, (0,1),(1,0),(0,0)}
\emph{Let $S=\{(0,0),(1,0),(0,1)\}\subseteq \mathbf{N}^2$. Then the Schnirelmann density of $V(S)$ is strictly smaller than its regular density. In fact, in the range $[1,10^3]$ there are about 916 integers $L$ for which 
$$\frac{|V(S)\cap [1,L|^2|}{L^2}< \prod_{p\in \mathcal{P}}\left(1- \frac{3}{p^2}\right)= D(V(S)).$$
The Schnirelmann density of $V(S)$ seems to be
\begin{equation*}
\begin{split}
\min_{1\leq L\leq 10^3}\frac{|V(S)\cap [1,L|^2|}{L^2}&= \frac{|V(S)\cap [1,4|^2|}{4^2}=\frac{1}{16}= 0.0625\\
&< 0.1254\ldots= \prod_{p\in\mathcal{P}}\left(1- \frac{3}{p^2}\right)= D(V(S)),
\end{split}
\end{equation*}
but we have not verified it.}
\end{example}
\begin{example}\label{example, (1,0,0)(0,1,0),(0,0,1)}
\emph{Let $S=\{(1,0,0),(0,1,0),(0,0,1)\}\subseteq \mathbf{N}^3$. Again, the Schnirelmann density of $V(S)$ is strictly smaller than its regular density. In the range $[1,10^3]$ there are about 227 integers $L$ for which
$$\frac{|V(S)\cap [1,L|^3|}{L^3} <\prod_{p\in \mathcal{P}}\left(1- \frac{3}{p^3}\right)= D(V(S)).$$
The Schnirelmann density of $V(S)$ seems to be
\begin{equation*}
\begin{split}
\min_{1\leq L\leq 10^3}\frac{|V(S)\cap [1,L|^3|}{L^3}&= \frac{|V(S)\cap [1,16|^3|}{16^3}=\frac{2146}{4096}= 0.5239\ldots\\
&< 0.5345\ldots= \prod_{p\in\mathcal{P}}\left(1-\frac{3}{p^3}\right)= D(V(S).
\end{split}
\end{equation*}
}
\end{example}
\begin{example}\label{example, (0,0,0),(1,0,0)(0,1,0),(0,0,1)}
\emph{Let $S=\{(0,0,0),(1,0,0),(0,1,0),(0,0,1)\}\subseteq \mathbf{N}^3$. In the range $[1,10^3]$ there are about 279 integers $L$ for which
$$\frac{|V(S)\cap [1,L|^3|}{L^3}<\prod_{p\in \mathcal{P}}\left(1- \frac{4}{p^3}\right)= D(V(S)).$$
The Schnirelmann density of $V(S)$ seems to be
\begin{equation*}
\begin{split}
\min_{1\leq L\leq 10^3}\frac{|V(S)\cap[1,L|^3|}{L^3}&= \frac{|V(S)\cap[1,10|^3|}{10^3}=\frac{350}{1000}= 0.35\\
&< 0.404\ldots= \prod_{p\in \mathcal{P}}\left(1- \frac{4}{p^3}\right)= D(V(S).  \end{split}
\end{equation*}
}
\end{example}

One readily observes in all four examples that we have a much higher frequency of $L$-s, for which the proportion of visible points in $[1, L]^k$ is below $D(V(S))$, than was the case for visibility from the origin. Also, the Schnirelmann density is smaller than the regular density in each of these cases, and the difference $D(V(S))-\SD(V(S))$ is much larger than for $S=\{\bf{0}\}$.

We have checked a bunch of additional ``random'' sets $S$ in dimensions $k=2, 3, 4$. In all examples, we calculated the proportion of points visible from $S$ in the boxes $[1, L]^k$ for $1\leq L \leq 1000$ in dimensions $k=2,3$, and for $1\leq L\leq 200$ for $k=4$. Similarly to Examples \ref{example, (0,1),(1,0)}-\ref{example, (0,0,0),(1,0,0)(0,1,0),(0,0,1)}, it turns out that the proportions are smaller than the limiting value $D(V(S))$ for many values of $L$. We will refer to such $L$-s as having {\it bad visibility}. Denote by $L_{\textup{min}}$ the value of $L$ for which the proportion is minimal, namely at which the Schnirelmann density $\SD(V(S))$ is attained. (In principle, there may be more than one such $L$. We only found the smallest of these.) 
\begin{remark}\label{remark of exceptional integers}\emph{
We emphasize that our results may be {\bf not completely accurate} due to the following reasons:
\begin{enumerate}[label=(\alph*)]
    \item The infinite product $\prod_{p\in \mathcal{P}}\left(1- \frac{s(p)}{p^k}\right)$, giving $D(V(S))$, was calculated numerically, and so contains some error. Thus, values of $L$, for which $|V(S)\cap[1, L]^k|/L^k$ is very close to $D(V(S))$, may have been classified erroneously as having bad visibility or not.
    \item We have not gone in our tests far enough to make sure that there is no value of $L$ beyond the checked range, for which the proportion of visible points in $[1, L]^k$ is below the minimum we found. However, since the $L_{\textup{min}}$ we record is usually much smaller than the maximal value checked, we believe the results are mostly accurate. 
\end{enumerate}
}
\end{remark}

The full results are shown in Tables \ref{table:1}, \ref{table:2}, \ref{table:3}.

\begin{table}[!htbp]
\centering
\begin{tabular}{|  l | L{3cm} | R{2cm} | C{2cm} |C{2cm} |}
 \hline
 ~~~~~~~~~~~~~~~$S$ & Number of $L$-s with bad visibility in $[1,10^3]$&$L_{\textup{min}}$~~~~~~& $\SD(V(S))$&$D(V(S))$\\
 \hline
 (0,0),(1,0) & ~~~~~~~307    &10~~~~~~~~&0.29000\ldots& 0.32263\ldots\\\hline
 (0,0),(2,2) & ~~~~~~~~~27    &192~~~~~~~~&0.48348\ldots& 0.48396\ldots\\\hline
 (0,0),(6,0) & ~~~~~~~~~36    &156~~~~~~~~&0.55227\ldots& 0.55309\ldots\\\hline
 (0,0),(1,0),(2,3)  & ~~~~~~~~415    &16~~~~~~~~&0.10938\ldots& 0.12549\ldots\\\hline
 (0,0),(2,2),(3,4)&   ~~~~~~~~~19  & 810~~~~~~~~   &0.25073\ldots& 0.25097\ldots\\\hline
 (1,2),(4,5),(8,3)&   ~~~~~~~~990  & 18~~~~~~~~   &0.27160\ldots& 0.29280\ldots\\
 \hline
 (0,1),(2,2),(3,4)&   ~~~~~~~~486  & 18~~~~~~~~   &0.13889\ldots& 0.14640\ldots\\
 \hline
 (1,0),(2,2),(3,4),(4,3)&   ~~~~~~~~728  & 10~~~~~~~~  &0.10000\ldots& 0.11358\ldots\\
 \hline
  (0,0),(1,0),(0,1),(2,2)&   ~~~~~~~~723  & 4~~~~~~~~ &0.06250\ldots& 0.09465\ldots\\
 \hline
 (1,0),(0,1),(2,2),(3,4)&  ~~~~~~~~939  & 10~~~~~~~~ &0.09000\ldots& 0.11358\ldots\\
 \hline
 (0,0),(1,0),(0,1),(4,2)&  ~~~~~~~~920  & 16~~~~~~~~ &0.05469\ldots& 0.09465\ldots\\
 \hline
 (0,0),(1,0),(2,2),(4,2)&  ~~~~~~~~543  & 8~~~~~~~~&0.07812\ldots& 0.09465\ldots\\
 \hline
 (0,0),(1,0),(0,1)(2,2),(3,4)&  ~~~~~~~~762  & 4~~~~~~~~ &0.06250\ldots& 0.08542\ldots\\
 \hline
 (1,0),(0,1)(2,2),(3,4),(4,3)&  ~~~~~~~~992  & 10~~~~~~~~&0.08000\ldots& 0.10250\ldots\\
 \hline
 (0,0),(2,2)(3,2),(3,4),(4,3)&  ~~~~~~~~~~~8  & 4~~~~~~~~&0.06250\ldots& 0.06834\ldots\\
 \hline
 (0,0),(1,2)(2,1),(3,2),(4,3)&  ~~~~~~~~993  & 14~~~~~~~~&0.04592\ldots& 0.06834\ldots\\
 \hline
 (0,1),(1,2)(2,1),(3,2),(2,3)&  ~~~~~~~~988  & 17~~~~~~~~&0.11419\ldots& 0.13668\ldots\\
 \hline
\end{tabular}
 \caption{Statistics of sets of visible points for some 2-dimensional sets $S$ (see Remark \ref{remark of exceptional integers}).}
\label{table:1}
\end{table}
\begin{table}[!htbp]
    \centering
\begin{tabular}{|  l | L{3cm} | R{2cm} | C{2cm} |C{2cm} |}
 \hline
 ~~~~~~~~~~~~$S$ & Number of $L$-s with bad visibility in $[1,200]$&$L_{\textup{min}}$~~~~~~& $\SD(V(S))$&$D(V(S))$\\
 \hline
 (0,0,0),(1,0,0)  & ~~~~~~~~~91    &36~~~~~~~&0.67554\ldots& 0.67689\ldots\\\hline
 (1,0,0),(0,1,0)  & ~~~~~~~170   &16~~~~~~~&0.67188\ldots& 0.67689\ldots\\\hline
 (0,0,0),(2,2,2)  & ~~~~~~~~~28    &72~~~~~~~&0.78937\ldots& 0.78971\ldots\\
 \hline
  (0,0,0),(6,6,0)  & ~~~~~~~~~11    &6~~~~~~~& 0.81944\ldots& 0.82130\ldots\\\hline
  (6,6,0),(0,6,6)  & ~~~~~~~~~86    &6~~~~~~~& 0.80555\ldots& 0.82130\ldots\\\hline
 (1,1,0),(0,1,0),(0,0,1)  & ~~~~~~~~266    &16~~~~~~~& 0.52343\ldots& 0.53457\ldots\\
 \hline
 (0,0,0),(2,2,2),(3,3,0)  & ~~~~~~~~~33    &60~~~~~~~& 0.66794\ldots& 0.66820\ldots\\
 \hline
 (2,0,0),(2,2,2),(0,2,2)  & ~~~~~~~~~70    &72~~~~~~~& 0.74711\ldots& 0.74840\ldots\\
 \hline
 (0,0,0),(4,0,4),(6,0,6)  & ~~~~~~~~~34    &70~~~~~~~& 0.77897\ldots& 0.77958\ldots\\
 \hline
\end{tabular}
\caption{Statistics of sets of visible points for some 3-dimensional sets $S$ (see Remark \ref{remark of exceptional integers}).}
\label{table:2}
\end{table}
\begin{table}[!htbp] 
    \centering
\begin{tabular}{|  l | L{3cm} | R{2cm} | C{2cm} |C{2cm} |}
 \hline
 ~~~~~~~~~~~~~~~~$S$ & Number of $L$-s with bad visibility in $[1,200]$&$L_{\textup{min}}$~~~~~~& $\SD(V(S))$&$D(V(S))$\\
 \hline
 (1,1,0,0),(0,1,1,1)  & ~~~~~~~~161    &12~~~~~~~&0.82424\ldots& 0.84973\ldots\\
 \hline
 (0,0,0,0),(2,2,2,2)  & ~~~~~~~~~~~9    &12~~~~~~~&0.90987\ldots& 0.91043\ldots\\
 \hline
 (1,1,0,0),(0,0,1,1)  & ~~~~~~~~~~90    &16~~~~~~~&0.84634\ldots& 0.84974\ldots\\
 \hline
 (0,0,1,0),(1,1,0,0),(1,0,0,1)  & ~~~~~~~~~~46  &4~~~~~~~&0.77344\ldots& 0.77738\ldots\\\hline
 (0,0,0,0),(1,1,0,0),(0,0,1,1)  & ~~~~~~~~~~33&16~~~~~~~&0.77556\ldots& 0.77738\ldots\\\hline
\end{tabular}
\caption{Statistics of sets of visible points for some 4-dimensional sets $S$ (see Remark \ref{remark of exceptional integers}).}
\label{table:3}
\end{table}

In view of our observations in the beginning of this section and the discussion in Section \ref{section, ergodic} we pose our final
\begin{question}\emph{ Is it true that, for every $k$ and finite $S\subset \Z^k$, the set of integers $L$ with bad visibility has an asymptotic density? Is it true, moreover, that this set has a Banach density? (For the notion of Banach density, see, for example,~\cite[p. 72]{FURSTEN}.)}
\end{question}
\section{Visibile Lattice Points in Discs}\label{Section on visibility in discs}
Denote by
$$A_k(x)=|\{(n_1,\ldots,n_k)\in \Z^k:\,  n_1^2+ \ldots + n_k^2\leq x\}|,\qquad k=2,3,\ldots, \qquad x\geq 0,$$
the number of lattice points in the $k$-dimensional disc $D_k(0,\sqrt{x})$ of radius $\sqrt{x}$, centered at the origin. $A_k(x)$ is very close to the volume of $D_k(0,\sqrt{x})$ for large $x$. More precisely, put
$$P_k(x)=A_k(x)- \textup{Vol}(D_k(0,\sqrt{x})), \qquad k=2,3,\ldots,\qquad x\geq 0.$$
The well-known Gauss circle problem (cf.\cite{Ivic}) is to estimate $P_2(x)$. For general $k\geq 2$, the estimation of $P_k(x)$ is known as the generalized Gauss circle problem (see \cite{Thomas}). Gauss showed, using a simple observation, that $P_2(x)= O(\sqrt{x})$. The error was improved by several authors (cf. \cite{Sierpinski, Kolesnic, Huxley1}), and the best estimate currently seems to be due to Bourgain and Watt \cite[Theorem 2]{Watt}:
$$P_2(x)= O\left(x^{517/1648+ \varepsilon}\right), \qquad \varepsilon >0.$$
In the other direction, Hardy \cite{Hardy1} showed that 
$$P_2(x)= \Omega_{+}(x^{1/4}), \qquad P_2(x)= \Omega_{-}(x^{1/4}(\log x)^{1/4}).$$
The $\Omega$-results were improved by Corr{\'a}di-K{\'a}tai \cite{Katai}, Hafner \cite{Hafner}, and Soundararajann \cite[Theorem 1]{Soundar}.
Similar results to those on $D_2(0,\sqrt{x})$ have been obtained for compact convex regions $\mathcal{D}\subseteq \R^2$ containing the origin, whose boundary satisfies some smoothness conditions (see, for example, \cite{Huxley2, Nowak2, Nowak3}). Huxley \cite[Theorem 5]{Huxley2} showed that the estimation from above of the analog of $P_2(x)$ for such $\mathcal{D}$ is not more difficult than that of $P_2(x)$ (see also \cite{Nowak}).

For the estimation of $P_k(x)$ in dimensions 3 and 4, we refer to \cite{Landau, Brown, Tsang, Walfisz1, Adhikari2}. 
For dimension $k\geq 5$, the situation becomes simpler (cf. \cite{Kratze}):
$$P_k(x)= O(x^{k/2-1}), \qquad P_k(x)= \Omega(x^{k/2-1}), 
\qquad k\geq 5.$$

Denote by
$$V_k'(x)=|\{(n_1,\ldots,n_k)\in \Z^k:\,  n_1^2+ \ldots + n_k^2\leq x,\, (n_1,\ldots,n_k)=1\}|,\quad k=2,3,\ldots,\,x\geq 0,$$
the number of lattice points visible from the origin in $D_k(0,\sqrt{x})$. $V_k'(x)$ is very close to $1/\zeta(k)\cdot \textup{Vol}(D_k(0,\sqrt{x}))$ for large $x$. Put
\begin{equation*}
E_k'(x)=V_k'(x)- \frac{1}{\zeta(k)}\cdot \textup{Vol}(D_k(0,\sqrt{x})), \qquad k=2,3,\ldots,\qquad x\geq 0.    
\end{equation*}
Huxley and Nowak \cite{Nowak} show that 
$$E_2'(x)= O(x^{1/2}\exp(-c(\log x)^{3/5}(\log\log x)^{-1/5})),$$
for some constant $c>0$, and under the Riemann Hypothesis
$E_2'(x)=O(x^{5/12+ \varepsilon})$ for arbitrary fixed $\varepsilon>0$. Currently, the best-known bound is due to Wu \cite{Wu}:
$$E_2'(x)= O\left(x^{221/608+ \varepsilon}\right), \qquad \varepsilon >0.$$
$\Omega$-results for $E_2'$ and $E_3'$ have also been obtained \cite{Nowak4, Fernando}. (For more details on visible lattice points in planar domains, see \cite{Barany, Moroz1, Nowak6, Zhai, Cao, Baker, Hensley}.)

In the spirit of this paper, it seems more appropriate to compare $V_k'(x)$ not with $1/\zeta(k)\cdot\textup{Vol}(D(0,\sqrt{x}))$ but rather with $1/\zeta(k)\cdot A_k(x)$. We wanted to check computationally whether, for visibility from the origin, we again have the phenomenon whereby, for most discs, the relative density of visible points within the disc exceeds the asymptotic density. The short answer is negative.
Similarly to the terminology from Section \ref{section, statistical}, let a positive integer $n$ be ``exceptional'' in this section (for dimension $k$) if 
$$V_k'(n)< A_k(n)/\zeta(k).$$
It turns out that, unlike the case of visible points in cubes $[1, L]^k$, exceptional integers for discs are not exceptional at all. In fact, for $k=2$, in the range $[1,10^9]$, there are 474072530 exceptional integers. Counting them in each of the ten sub-intervals $(j\cdot10^8,(j+1)\cdot10^8]$, $0\leq j\leq 9$, we find that the minimum is 20862314, attained at $(7\cdot 10^8, 8\cdot 10^8]$, and the maximum is 63524202, attained at $(5\cdot 10^8, 6\cdot 10^8]$. For $k=3$, in the range $[1,10^6]$, there are 500724 exceptional integers. Counting them in each of the ten sub-intervals $(j\cdot10^5,(j+1)\cdot10^5]$, $0\leq j\leq 9$, we find that the minimum is 46484, attained at $(6\cdot10^5, 7\cdot10^5]$, and the maximum is 51656, attained at $(7\cdot10^5, 8\cdot10^5]$. For $k=4$, in the range $[1, 10^6]$, there are 500220 exceptional integers. Counting them in each of the ten sub-intervals $(j\cdot10^5,(j+1)\cdot10^5]$, $0\leq j\leq 9$, we find that the minimum is 49982, attained at $(9\cdot10^5, 10^6]$, and the maximum is 50092, attained at $(8\cdot10^5, 9\cdot10^5]$.

Recall that $V_k$ is the set of lattice points visible from the origin. The Schnirelmann density of $V_k$ was defined as the minimum over $L$ of the relative density $|V_k\cap[1, L]^k|/L^k$ of the set of visible points within cubes $[1, L]^k$. We may consider the analogous ratio when we go over discs $D_k(0,\sqrt{x})$. Namely, let
$$\SD'(V_k)= \inf_{n\in N} \frac{V_k'(n)}{A_k(n)}, \qquad n= 1,2,\ldots,\qquad (k=1,2\ldots).$$
(As mentioned above, the origin is considered as invisible.) Our calculations hint that
$$\SD'(V_2)= \frac{V_2'(9)}{A_2(9)}= \frac{16}{29}=0.552< 0.608= 1/\zeta(2),$$
$$\SD'(V_3)= \frac{V_3'(4)}{A_3(4)}= \frac{26}{33}=0.788< 0.832= 1/\zeta(3),$$
$$\SD'(V_4)= \frac{V_4'(1)}{A_4(1)}=\frac{8}{9}= 0.889< 0.924= 1/\zeta(4).$$
\newpage
\bibliographystyle{plain}

\end{document}